\newcommand{\ve}[2]{\langle #1 ,  #2 \rangle}
\newcommand{\St}[2]{\mathcal{S}^{#2}_{#1}} 
\newcommand{\sparsity}{\gamma}
\newcommand{\level}{\omega}
\newcommand{\single}{\phi_{\ell_1}} 
\newcommand{\singlec}{\phi_{\ell_0}} 
\newcommand{\block}{\phi_{\ell_1,m}} 
\newcommand{\blockc}{\phi_{\ell_0,m}} 
\newcommand{\Asingle}{\mathsf{GPower}_{\ell_1}}
\newcommand{\Asinglec}{\mathsf{GPower}_{\ell_0}}
\newcommand{\Ablock}{\mathsf{GPower}_{\ell_1,m}}
\newcommand{\Ablockc}{\mathsf{GPower}_{\ell_0,m}}
\newcommand{\rSVD}{\mathsf{rSVD}_{\ell_1}}
\newcommand{\rSVDc}{\mathsf{rSVD}_{\ell_0}}
\newcommand{\Greedy}{\mathsf{Greedy}}
\newcommand{\SPCA}{\mathsf{SPCA}}
\newcommand{\DSPCA}{\mathsf{DSPCA}}
\newcommand{\PCA}{\mathsf{PCA}}
\newcommand{\sphere}[1]{\mathcal{S}^{#1}}
\newcommand{\sphereprod}[2]{[\mathcal{S}^{#1}]^{#2}}
\newcommand{\sph}{\mathcal{S}}
\newcommand{\ball}{\mathcal{B}}
\newcommand{\Q}{\mathcal{Q}}
\newcommand{\R}{\mathbf{R}}
\newcommand{\E}{\mathbf{E}}
\newcommand{\Scone}{\mathbf{S}}
\newcommand{\G}{G}
\newcommand{\card}[1]{\|#1\|_{0}}
\DeclareMathOperator{\conv}{Conv}
\DeclareMathOperator{\Diag}{Diag}
\DeclareMathOperator{\trace}{Tr}
\DeclareMathOperator{\rank}{Rank}
\DeclareMathOperator{\uf}{Uf}
\DeclareMathOperator{\signum}{sign}
\DeclareMathOperator{\Arg}{Arg}
\DeclareMathOperator{\Var}{Var}
\DeclareMathOperator{\AVar}{AdjVar}
\newcommand{\eqdef}{\stackrel{\text{def}}{=}}
\newcommand{\indef}{\stackrel{\text{def}}{\in}}
\newcommand{\suchthat}{\;|\;}
\newcommand{\combib}[1]{}
\newcommand{\tabsize}{\small}
\newcommand{\capsize}{\footnotesize}
\newcommand{\figwidth}{7cm}
\newenvironment{proof}{{\it Proof.}}{\hspace{\stretch{1}} $\square$}
\newtheorem{thrm}{Theorem}
\newtheorem{lmm}[thrm]{Lemma}
\newtheorem{prpstn}[thrm]{Proposition}
\newtheorem{xmpl}[thrm]{Example}
\newtheorem{ssmptn}{Assumption}
\begin{document}

\title{Generalized power method\\ for sparse principal component analysis}
\author{Michel Journée\footnotemark[1] \and Yurii Nesterov\footnotemark[2] \and Peter Richt\'{a}rik\footnotemark[2] \and Rodolphe Sepulchre\footnotemark[1]}
\date{}
\renewcommand{\thefootnote}{\fnsymbol{footnote}}
 \footnotetext[1]{Department of Electrical
Engineering and Computer Science, University of Li\`{e}ge, B-4000
Liège, Belgium. Email: [M.Journee, R.Sepulchre]@ulg.ac.be}
\footnotetext[2]{Center for Operations Research and Econometrics, Catholic University of Louvain, Voie du Roman Pays 34, B-1348 Louvain-la-Neuve, Belgium. Email: [Nesterov, Richtarik]@core.ucl.ac.be}
\maketitle
\renewcommand{\thefootnote}{\arabic{footnote}}

\begin{abstract}
In this paper we develop a new approach to sparse principal component analysis (sparse PCA). We propose two single-unit and two block optimization formulations of the sparse PCA problem, aimed at extracting a single sparse dominant principal component of a data matrix, or more components at once, respectively. While the initial formulations involve nonconvex functions, and are therefore computationally intractable, we rewrite them into the form of an optimization program involving maximization of a convex function on a compact set. The dimension of the search space is decreased enormously if the data matrix has many more columns (variables) than rows. We then propose and analyze a simple gradient method suited for the task. It appears that our algorithm has best convergence properties in the case when either the objective function or the feasible set are strongly convex, which is the case with our single-unit formulations and can be enforced in the block case. Finally, we demonstrate numerically on a set of random and gene expression test problems that our approach outperforms existing algorithms both in quality of the obtained solution and in computational speed.\\
\textbf{Keywords:} sparse PCA, power method, gradient ascent, strongly convex sets, block algorithms
\end{abstract}

\section{Introduction}

\emph{Principal component analysis} (PCA) is a well established tool for making sense of high dimensional data by reducing it to a smaller dimension.
It has applications virtually in all areas of science---machine learning, image processing, engineering, genetics, neurocomputing, chemistry, meteorology, control theory, computer networks---to name just a few---where large data sets are encountered. It is important that having reduced dimension, the essential characteristics of the data are retained. If  $A \in \R^{p \times n}$ is a matrix encoding $p$ samples of $n$ variables, with $n$ being large, PCA aims at finding a few linear combinations of these
variables, called \emph{principal components}, which point in orthogonal directions explaining as much of the variance in the data as possible. If the variables contained in the columns of $A$ are centered, then the classical PCA can be written in terms of the scaled \emph{sample covariance matrix} $\Sigma = AA^T$ as follows:
\begin{equation}\label{eq:PCA}\text{Find} \quad z^* =\arg \max_{z^Tz \leq 1} z^T \Sigma z. \end{equation}

Extracting one component amounts to computing the dominant eigenvector of $\Sigma$ (or, equivalently, dominant right singular vector of $A$). Full PCA involves the computation of the singular value decomposition (SVD) of $A$.
Principal components are, in general, combinations of all the input variables, i.e. the \emph{loading vector} $z^*$ is not expected to have many zero coefficients. In most applications, however, the original variables have concrete physical meaning and PCA then appears especially interpretable if
the extracted components are composed only from a small number of the original variables. In the case of gene expression data, for instance, each variable
represents the expression level of a particular gene. A good analysis tool  for biological interpretation should be capable to highlight ``simple'' structures
in the genome---structures expected to involve a few genes only---that explain a significant amount of the specific biological
processes encoded in the data. Components that are linear combinations of a small number of variables are, quite naturally, usually easier to interpret.
It is clear, however, that with this additional goal, some of the explained variance has to be sacrificed. The objective of
\emph{sparse principal component analysis} (sparse PCA) is to find a reasonable \emph{trade-off} between these conflicting goals. One would like to
explain \emph{as much} variability in the data as possible, using components constructed from \emph{as few} variables as possible. This is the classical trade-off between \emph{statistical fidelity}  and \emph{interpretability}.

\bigskip
For about a decade, sparse PCA has been a topic of active research.
Historically, the first suggested approaches  were based on ad-hoc methods involving post-processing of the components obtained from classical PCA.
For example, \combib{Joliffe et al. }\citet{jolliffe95} considers using various rotation
techniques to find sparse loading vectors in the subspace identified by
PCA. \combib{Cadima et al. }\citet{Cadima95} propose to simply set to zero the PCA loadings
which are in absolute value smaller than some threshold constant.

In recent years, more involved approaches have been put forward---approaches that consider
the conflicting goals of explaining variability and achieving representation sparsity simultaneously. These
methods usually cast the sparse PCA problem in the form of an optimization program, aiming at maximizing explained variance penalized for
the number of non-zero loadings. For instance, the SCoTLASS algorithm proposed by \combib{Jolliffe et al. }\citet{jolliffe03} aims at maximizing the Rayleigh quotient
of the covariance matrix of the data under the $\ell_1$-norm based Lasso penalty (\citet{Tibshirani96}). \combib{Zou et al. }\citet{Zou04} formulate sparse PCA as a
regression-type optimization problem and impose the Lasso penalty on the regression coefficients. \combib{d'Aspremont et al. }\citet{Aspremont07} in their $\DSPCA$ algorithm exploit
convex optimization tools to solve a convex relaxation of the sparse PCA problem.
\combib{Shen and Huang }\citet{shen08} adapt the singular value decomposition (SVD) to compute low-rank matrix approximations of the data matrix
under various sparsity-inducing penalties. Greedy methods, which are typical for combinatorial problems, have been investigated by \combib{Moghaddam et al. }\citet{Moghaddam06}. Finally,  \combib{d'Aspremont et al. }\citet{Aspremont07b} propose a greedy heuristic accompanied with a certificate of optimality.

\bigskip
In many applications, several components need to be identified. The traditional approach consists of incorporating an existing single-unit algorithm in a deflation scheme, and computing the desired number of components sequentially (see, e.g., \combib{d'Aspremont et al. }\citet{Aspremont07}). In the case of Rayleigh quotient maximization it is well-known that computing several components at once instead of computing them one-by-one by deflation with the classical power method might present better convergence whenever the largest eigenvalues of the underlying matrix are close to each other (see, e.g., \combib{Parlett }\citet{Parlett80}). Therefore, block approaches for sparse PCA are expected to be more efficient on ill-posed problems.

\bigskip
In this paper we consider two single-unit (Section 2.1 and 2.3) and two block formulations (Section 2.3 and 2.4) of sparse PCA, aimed at extracting $m$ sparse principal components, with $m=1$ in the former case and $p\geq m>1$ in the latter. Each of these two groups comes in two variants, depending on the type of penalty we use to enforce sparsity---either $\ell_1$ or $\ell_0$ (cardinality).~\footnote{Our single-unit cardinality-penalized formulation is identical to that of \combib{d'Aspremont et al. }\citet{Aspremont07b}.} While our basic formulations involve maximization of a \emph{nonconvex} function on a space of dimension involving $n$, we construct \emph{reformulations} that cast the problem into the form of maximization of a \emph{convex} function on the unit Euclidean sphere in $\R^p$ (in the $m=1$ case) or the \emph{Stiefel manifold}\footnote{Stiefel manifold is the set of rectangular matrices with orthonormal columns.} in $\R^{p\times m}$ (in the $m>1$ case). The advantage of the reformulation becomes apparent when trying to solve problems with many variables ($n\gg p$), since we manage to avoid searching a space of large dimension. At the same time, due to the convexity of the new cost function we are able to propose and \emph{analyze} the iteration-complexity of a simple gradient-type scheme, which appears to be well suited for problems of this form. In particular, we study (Section \ref{sec:theory}) a first-order method for solving an optimization problem of the form
\begin{equation}\tag{P}
f^* = \max_{x\in \Q} f(x),
\end{equation}
where $\Q$ is a compact subset of a finite-dimensional vector space and $f$ is convex. It appears that our method has best theoretical convergence properties when either $f$ or $\Q$ are strongly convex, which is the case in the single unit case (unit ball is strongly convex) and can be enforced in the block case by adding a strongly convex regularizing term to the objective function, constant on the feasible set. We do not, however, prove any results concerning the quality of the obtained solution. Even the goal of obtaining a local maximizer is in general unattainable, and we must be content with convergence to a stationary point.

In the particular case when $\Q$ is the unit Euclidean ball in $\R^p$ and $f(x) = x^T C x$ for some $p\times p$ symmetric positive definite matrix $C$, our gradient scheme specializes to the \emph{power method}, which aims at maximizing the \emph{Rayleigh quotient} \[R(x)=\frac{x^T C x}{x^T x}\] and thus at computing the largest eigenvalue, and the corresponding eigenvector, of $C$.

\bigskip
By applying our general gradient scheme to our sparse PCA reformulations of the form (P), we obtain algorithms (Section 4) with per-iteration computational cost $\mathcal{O}(npm)$.

\bigskip
We demonstrate on random Gaussian (Section 5.1) and gene expression data related to breast cancer (Section 5.2) that our methods are very efficient in practice. While achieving a balance between the explained variance and sparsity which is the same as or superior to the existing methods, they are faster, often converging before some of the other algorithms manage to initialize. Additionally, in the case of gene expression data our approach seems to extract components with strongest biological content.

\bigskip
\textbf{Notation.}  For convenience of the reader, and at the expense of redundancy, some of the less standard notation below is also introduced at the appropriate place in the text where it is used. Parameters $m\leq p\leq n$ are actual values of dimensions of spaces used in the paper. In the definitions below, we use these actual values (i.e. $n,p$ and $m$) if the corresponding object we define is used in the text exclusively with them; otherwise we make use of the dummy variables $k$ (representing $p$ or $n$ in the text) and $l$ (representing $m,p$ or $n$ in the text).

We will work with vectors and matrices of various sizes ($\R^k, \R^{k\times l}$). Given a vector $y \in \R^k$, its $i^{\mathrm{th}}$ coordinate is denoted by $y_i$.
For a matrix $Y \in \R^{k \times l}$, $y_i$ is the $i^{\mathrm{th}}$ column of $Y$ and $y_{ij}$ is the element of $Y$ at position $(i,j)$.

By $\E$ we refer to a finite-dimensional vector space; $\E^*$ is its conjugate space, i.e. the space of all linear functionals on $\E$. By $\ve{s}{x}$ we denote the action of $s\in\E^*$ on $x\in \E$. For a self-adjoint positive definite linear operator $\G:\E\to \E^*$ we define a pair of norms on $\E$ and $\E^*$ as follows
\begin{equation}\label{eq:norms}
\begin{array}{rcl}
\|x\| & \eqdef & \ve{\G x}{x}^{1/2}, \quad x\in \E,\\
\\
\|s\|_* & \eqdef & \ve{s}{\G^{-1}s}^{1/2}, \quad s\in \E^*.
\end{array}
\end{equation}

Although the theory in Section~\ref{sec:theory} is developed in this general setting, the sparse PCA applications considered in this paper require either the choice $\E=\E^*=\R^p$ (see Section~\ref{sec:vector_setting} and problems (\ref{eq:SPCA3}) and (\ref{eq:SPCA4}) in Section~\ref{sec:spca}) or $\E=\E^*=\R^{p\times m}$ (see Section~\ref{sec:matrix_setting} and problems (\ref{eq:block_spca2}) and (\ref{eq:block_spca4}) in Section~\ref{sec:spca}). In both cases we will let $\G$ be the corresponding identity operator for which we obtain
\[\ve{x}{y} = \sum_i x_i y_i, \quad \|x\| = \ve{x}{x}^{1/2} = \left(\sum_i x_i^2\right)^{1/2} = \|x\|_2, \quad x,y\in \R^p, \text {and}\]
\[\ve{X}{Y} = \trace X^TY, \quad \|X\| = \ve{X}{X}^{1/2} = \left(\sum_{ij} x_{ij}^2\right)^{1/2} = \|X\|_F, \quad X,Y\in \R^{p\times m}.\]

Thus in the vector setting we work with the \emph{standard Euclidean norm} and in the matrix setting with the \emph{Frobenius norm}. The symbol $\trace$ denotes the trace of its argument.

Furthermore, for $z\in \R^n$ we write $\|z\|_1=\sum_i |z_i|$ ($\ell_1$ norm) and by $\|z\|_0$ ($\ell_0$ ``norm") we refer to the number of nonzero coefficients, or \emph{cardinality}, of $z$. By $\Scone^p$ we refer to the space of
all $p\times p$ symmetric matrices; $\Scone^p_+$ (resp.
$\Scone^p_{++}$) refers to the positive semidefinite (resp.
definite) cone. Eigenvalues of matrix $Y$ are denoted by $\lambda_i(Y)$, largest eigenvalue by $\lambda_{\text{max}}(Y)$. Analogous notation with the symbol $\sigma$ refers to singular values.

By $\ball^{k}=\{y \in \R^k \suchthat y^T y \leq 1\}$ (resp. $\sphere{k}~ = \{y \in \R^k \suchthat y^T y =
1\}$) we refer
to the unit Euclidean ball (resp. sphere) in $\R^k$. If we write $\ball{}$ and $\sphere{}$, then these are the corresponding
objects in $\E$. The space of $n\times m$ matrices with unit-norm columns will be denoted by
\[\sphereprod{n}{m}  = \{Y \in \R^{n\times m}
\suchthat \Diag(Y^T Y) = I_m\},\]
where $\Diag(\cdot)$ represents the diagonal matrix obtained by extracting the diagonal of the argument.
\emph{Stiefel manifold} is the set of rectangular matrices of fixed size with orthonormal columns: \[\St{m}{p} = \{Y \in \R^{p\times
m} \suchthat Y^T Y = I_m\}.\]

 For $t\in \R$ we will further write $\signum(t)$ for the sign of the argument and $t_+ = \max\{0,t\}$.

\bigskip
\section{Some formulations of the sparse PCA problem} \label{sec:spca}

In this section we propose four formulations of the sparse PCA
problem, all in the form of the general optimization framework (P).
The first two deal with the single-unit sparse PCA problem and the remaining two are their generalizations to the block case.

\subsection{Single-unit sparse PCA via $\ell_1$-penalty}

Let us consider the optimization problem
\begin{equation}
\label{eq:RQ_l1} \single(\sparsity) \eqdef \max_{z \in \ball^n}
\sqrt{z^T \Sigma z} - \sparsity \|z\|_1,
\end{equation}
with sparsity-controlling parameter $\sparsity \geq 0$ and sample covariance matrix $\Sigma = A^T A$.

The solution $z^*(\sparsity)$ of (\ref{eq:RQ_l1}) in the case
$\sparsity=0$ is equal to the right singular vector corresponding to $\sigma_{\text{max}}(A)$,
the largest singular value of $A$. It is the first principal
component of the data matrix $A$. The optimal value of the problem
is thus equal to
\[\single(0) = (\lambda_{\text{max}}(A^T A))^{1/2} = \sigma_{\text{max}}(A).\]
Note that there is no reason to expect this vector to be sparse. On
the other hand, for large enough $\sparsity$, we will necessarily
have $z^*(\sparsity)=0$, obtaining maximal sparsity. Indeed, since
\[\max_{z\neq 0} \frac{\|Az\|_2}{\|z\|_1} = \max_{z\neq 0} \frac{\|\sum_i z_i a_i\|_2}{\|z\|_1} \leq \max_{z \neq 0} \frac{\sum_i |z_i|\|a_i\|_2}{\sum_i |z_i|} = \max_i \|a_i\|_2 = \|a_{i^*}\|_2,\]
we get $\|Az\|_2 - \sparsity \|z\|_1 < 0$ for all nonzero vectors $z$
whenever $\sparsity$ is chosen to be strictly bigger than $\|a_{i^*}\|_2$. From now on we will assume that
\begin{equation}\label{eq:2:bound_on_sparsity}\sparsity < \|a_{i^*}\|_2.\end{equation}

Note that there is a trade-off between the value
$\|Az^*(\sparsity)\|_2$ and the sparsity of the solution
$z^*(\sparsity)$. The penalty parameter $\sparsity$ is introduced to
``continuously" interpolate between the two extreme cases described
above, with values in the interval $[0,\|a_{i^*}\|_2)$. It depends on the particular application whether sparsity is
valued more than the explained variance, or vice versa, and to what
extent. Due to these considerations, we will consider the solution
of (\ref{eq:RQ_l1}) to be a sparse principal component of $A$.

\bigskip
\textbf{Reformulation.} The reader will observe that the objective function in (\ref{eq:RQ_l1}) is not convex, nor concave, and that the feasible set is of a high dimension if $p\ll n$.
It turns out that these shortcomings are overcome by considering the following reformulation:
\begin{align}
\single(\sparsity)&= \max_{z \in \ball^n} \|Az\|_2 - \sparsity \|z\|_1 \nonumber\\
&=\max_{z \in \ball^n} \max_{x \in \ball^p} x^T A z - \sparsity \|z\|_1 \label{eq:SPCA_single_unit}\\
&=\max_{x \in \ball^p} \max_{z \in \ball^n} \sum_{i=1}^n z_i (a_i^T x) - \sparsity |z_i| \nonumber \\
&=\max_{x \in \ball^p}\max_{\bar{z} \in \ball^n} \sum_{i=1}^n |\bar{z}_i| (|a_i^T x| - \sparsity),
\label{eq:SPCA}
\end{align}
where $z_i = \signum(a_i^Tx)\bar{z}_i$. In view of
(\ref{eq:2:bound_on_sparsity}), there is some  $x\in \ball^n$ for
which $a_i^Tx > \sparsity$. Fixing such $x$, solving the inner
maximization problem for $\bar{z}$ and then translating back to $z$,
we obtain the closed-form solution  \begin{equation}
\label{eq:close_form_sol_x} z_i^* = z_i^*(\sparsity) = \frac{\signum(a_i^T x) [|a_i^T
x| - \sparsity]_+}{\sqrt{\sum_{k=1}^n [|a_k^T x| - \sparsity]_+^2}}, \quad i=1,\dots,n.
\end{equation}
Problem (\ref{eq:SPCA}) can therefore be written in the form
\begin{equation} \label{eq:SPCA3} \boxed{
\single^2(\sparsity)=\max_{x \in \sphere{p}} \sum_{i=1}^n
[|a_i^T x| - \sparsity]_+^2.}
\end{equation}
Note that the objective function is differentiable and convex, and hence all local and global maxima must lie on the
boundary, i.e., on the unit Euclidean sphere $\sphere{p}$.
Also, in the case when $p \ll n$, formulation (\ref{eq:SPCA3})
requires to search a space of a much lower dimension than the
initial problem (\ref{eq:RQ_l1}).

\bigskip
\textbf{Sparsity.} In view of (\ref{eq:close_form_sol_x}), an optimal solution $x^*$ of
(\ref{eq:SPCA3}) defines a sparsity pattern of the vector $z^*$. In
fact, the coefficients of $z^*$ indexed by
\begin{equation}
\label{eq:sparsity_pattern_l1}
\mathcal{I}=\{i \suchthat |a_i^T x^*|
> \sparsity \}\end{equation} are active while all others must
be zero. Geometrically, active indices correspond to the defining
hyperplanes of the polytope \[\mathcal{D} = \{x \in \R^{p} \suchthat
|a_i^T x| \leq 1\}\] that are (strictly) crossed by the line joining
the origin and the point $x^*/\sparsity$. Note that it is possible to say something about the sparsity of the solution even without the knowledge of $x^*$:
\begin{equation}\label{eq:singleL1_sparsity}\sparsity \geq \|a_{i}\|_2 \quad \Rightarrow  \quad z^*_i(\sparsity) = 0, \qquad i=1,\dots,n.\end{equation}

\subsection{Single-unit sparse PCA via cardinality penalty}
\label{sec:spca_single_card} Instead of the $\ell_1$-penalization,
\combib{the authors of }\citet{Aspremont07b} consider the formulation
\begin{equation}
\label{eq:RQ_l0} \singlec(\sparsity) \eqdef \max_{z \in
\ball^n} z^T \Sigma z - \sparsity \; \card{z},
\end{equation}
which directly penalizes the number of nonzero components
(cardinality) of the vector $z$.

\bigskip
\textbf{Reformulation.} The reasoning of the previous section suggests the reformulation
\begin{equation}
\label{eq:spca_single_l0} \singlec(\sparsity)= \max_{x \in
\ball^p} \max_{z \in \ball^n}  (x^T A z)^2 - \sparsity
\card{z},\end{equation} where the maximization with respect to $z
\in \ball^n$ for a fixed  $x \in \ball^p$ has the closed form solution
\begin{equation}
\label{eq:spca_single_l0_2} z_i^* = z_i^*(\sparsity)=\frac{[\signum((a_i^T
x)^2-\sparsity)]_+ a_i^T x}{\sqrt{\sum_{k=1}^n [\signum((a_k^T
x)^2-\sparsity)]_+ (a_k^T x)^2}}, \quad i=1,\dots,n.
\end{equation}
In analogy with the $\ell_1$ case, this derivation assumes that
\[\sparsity < \|a_{i^*}\|_2^2,\]
so that there is $x\in \ball^n$ such that $(a_i^T x)^2-\sparsity>0$. Otherwise $z^*=0$ is optimal.
Formula (\ref{eq:spca_single_l0_2}) is easily obtained by analyzing (\ref{eq:spca_single_l0}) separately for fixed cardinality values of $z$.
Hence, problem (\ref{eq:RQ_l0}) can be cast in the following form
\begin{equation}
\label{eq:SPCA4}\boxed{\singlec(\sparsity)=\max_{x \in \sphere{p}} \sum_{i=1}^n [(a_i^T x)^2 - \sparsity]_+.}
\end{equation}
Again, the objective function is convex, albeit nonsmooth, and the new search space is of particular interest if $p\ll n$. A different derivation of (\ref{eq:SPCA4}) for the $n=p$ case can be found
in \citet{Aspremont07b}.

\bigskip
\textbf{Sparsity.} Given a solution
$x^*$ of (\ref{eq:SPCA4}), the set of active indices of $z^*$ is
given by
\[\mathcal{I}=\{i \suchthat (a_i^T x^*)^2 >\sparsity \}.\]
Geometrically, active indices correspond to the defining
hyperplanes of the polytope \[\mathcal{D} = \{x \in \R^{p} \suchthat
|a_i^T x| \leq 1\}\] that are (strictly) crossed by the line joining
the origin and the point $x^*/\sqrt{\sparsity}$. As in the $\ell_1$ case,
we have
\begin{equation}\label{eq:singleL0_sparsity}\sparsity \geq \|a_{i}\|_2^2 \quad \Rightarrow  \quad z^*_i(\sparsity) = 0, \qquad i=1,\dots,n.\end{equation}

\subsection{Block sparse PCA via $\ell_1$-penalty}
\label{sec:block_spca_l1}
Consider the following block generalization of (\ref{eq:SPCA_single_unit}),
\begin{equation}
\block(\sparsity) \eqdef \max_{\substack{X \in \St{m}{p}\\Z \in \sphereprod{n}{m}}} \trace(X^T A Z N)
- \sparsity \sum_{j=1}^m \sum_{i=1}^n |z_{ij}| \label{eq:block_spca1},
\end{equation}
where $\sparsity \geq 0$ is a sparsity-controlling parameter and $N=\Diag(\mu_1, \ldots, \mu_m)$, with positive entries on the diagonal. The dimension $m$ corresponds to the number of extracted components and is assumed to be smaller or equal to the rank of the data matrix, i.e., $m \leq \rank(A)$. It will be shown below that under some conditions on the parameters $\mu_i$, the case $\sparsity=0$ recovers PCA. In that particular instance, any solution $Z^*$ of (\ref{eq:block_spca1}) has orthonormal columns, although this is not explicitly enforced. For positive $\sparsity$, the columns of $Z^*$ are not expected to be orthogonal anymore. Most existing algorithms for computing several sparse principal components, e.g., \citet{Zou04,Aspremont07,shen08}, also do not impose orthogonal loading directions. Simultaneously enforcing sparsity and orthogonality seems to be a hard (and perhaps questionable) task.

\bigskip
\textbf{Reformulation.} Since problem (\ref{eq:block_spca1}) is completely decoupled in the columns of
$Z$, i.e.,
\begin{equation*}
 \block(\sparsity) = \max_{X \in \St{m}{p}} \sum_{j=1}^m  \max_{z_j \in \sphere{n}} \mu_j x_j^T A z_j
-\sparsity \|z_{j}\|_1,
\end{equation*}
the closed-form solution (\ref{eq:close_form_sol_x}) of (\ref{eq:SPCA_single_unit}) is easily
adapted to the block formulation (\ref{eq:block_spca1}):
\begin{equation}\label{eq:spca_block_l1x}
 z_{ij}^* = z_{ij}^*(\sparsity) = \frac{\signum(a_i^T x_j)
[\mu_j |a_i^T x_j| - \sparsity]_+}{\sqrt{\sum_{k=1}^n [\mu_j |a_k^T x_j| -
\sparsity]_+^2}}.
\end{equation}
This leads to the reformulation
\begin{equation}
\label{eq:block_spca2} \boxed{\block^2(\sparsity)=
\underset{X \in \St{m}{p}}{\max} \sum_{j=1}^m \sum_{i=1}^n
[\mu_j |a_i^T x_j|-\sparsity]_+^2,}
\end{equation}
which maximizes a convex function $f: \R^{p \times m} \rightarrow \R$ on the Stiefel manifold $\St{m}{p}$.

\bigskip
\textbf{Sparsity.} A solution $X^*$ of (\ref{eq:block_spca2}) again
defines the sparsity pattern of the matrix $Z^*$: the entry $z^*_{ij}$
is active if \[\mu_j |a_i^T x_j^*|
> \sparsity,\]
and equal to zero otherwise. For
$\sparsity~>~\max_{i,j}  \mu_j \|a_i\|_2$, the trivial solution $Z^*=0$ is optimal.

\bigskip
\textbf{Block PCA.} For $\sparsity=0$, problem (\ref{eq:block_spca2}) can be equivalently written in the form
\begin{equation}
\label{eq:brockett} \block^2(0) = \max_{X \in \St{m}{p}} \trace(X^T A A^T X N^2),
\end{equation}
which has been well studied (see e.g., \combib{Brockett }\citet{Brockett91} and \combib{Absil et al. }\citet{AbsMahSep2008}). The solutions of (\ref{eq:brockett}) span the dominant $m$-dimensional invariant subspace of the matrix $A A^T$. Furthermore, if the parameters $\mu_i$ are all distinct, the columns of $X^*$ are the $m$ dominant eigenvectors of $A A^T$, i.e., the $m$ dominant left-eigenvectors of the data matrix $A$. The columns of the solution $Z^*$ of (\ref{eq:block_spca1}) are thus the $m$ dominant right singular vectors of $A$, i.e., the PCA loading vectors. Such a matrix $N$ with distinct diagonal elements enforces the objective function in (\ref{eq:brockett}) to have isolated maximizers. In fact, if $N=I_m$, any point $X^* U$ with $X^*$ a solution of (\ref{eq:brockett}) and $U \in \St{m}{m}$ is also a solution of (\ref{eq:brockett}). In the case of sparse PCA, i.e., $\sparsity>0$, the penalty term enforces isolated maximizers. The technical parameter $N$ will thus be set to the identity matrix in what follows.

\subsection{Block sparse PCA via cardinality penalty}

The single-unit cardinality-penalized case can also be naturally extended
to the block case:
\begin{equation}
\blockc(\sparsity)\eqdef \max_{\substack{X \in
\St{m}{p}\\Z \in \sphereprod{n}{m}}} \trace(\Diag(X^T A Z N)^2) - \sparsity \card{Z}
\label{eq:block_spca3},
\end{equation}
where $\sparsity \geq 0$ is the sparsity inducing parameter and $N=\Diag(\mu_1, \ldots, \mu_m)$ with positive entries on the diagonal.
In the case $\sparsity=0$, problem (\ref{eq:block_spca4}) is equivalent to (\ref{eq:brockett}) and therefore corresponds to PCA, provided that all $\mu_i$ are distinct.

\bigskip
\textbf{Reformulation.} Again, this block formulation is completely decoupled in the columns of $Z$,
\begin{equation*}
\blockc(\sparsity) = \underset{X \in \St{m}{p}}{\max}
\sum_{j=1}^m  \underset{z_j \in \sphere{n}}{\max} (\mu_j x_j^T A
z_j)^2 -\sparsity \card{z_{j}},
\end{equation*}
so that the solution (\ref{eq:spca_single_l0_2}) of the single unit case provides the optimal columns $z_i$:
\begin{equation}
\label{eq:block_spca_card_solution}
z_{ij}^* = z_{ij}^*(\sparsity) = \frac{[\signum((\mu_j a_i^T x_j)^2-\sparsity)]_+ \mu_j a_i^T x_j}{\sqrt{\sum_{k=1}^n [\signum((\mu_j a_k^T x_j)^2-\sparsity)]_+  \mu_j^2 (a_k^T x_j)^2}}.
\end{equation}
The reformulation of problem (\ref{eq:block_spca3}) is thus
\begin{equation}
\label{eq:block_spca4}\boxed{\blockc(\sparsity)= \max_{X \in \St{m}{p}} \sum_{j=1}^m \sum_{i=1}^n
[(\mu_j a_i^T x_j)^2 - \sparsity]_+,}
\end{equation}
which maximizes a convex function $f: \R^{p \times m} \rightarrow \R$ on the Stiefel manifold $\St{m}{p}$.

\bigskip
\textbf{Sparsity.} For a solution $X^*$ of (\ref{eq:block_spca4}), the active entries $z_{ij}^*$ of $Z^*$ are given by the condition \[(\mu_j a_i^T x_j^*)^2 > \sparsity.\] Hence for
$\sparsity > \underset{i,j}{\max} \; \mu_j \|a_i\|_2^2,$ the optimal solution of (\ref{eq:block_spca3}) is $Z^*=0$.

\bigskip
\section{A gradient method for maximizing convex functions} \label{sec:theory}

By $\E$ we denote an arbitrary finite-dimensional vector space;
$\E^*$ is its conjugate, i.e. the space of all linear functionals on
$\E$. We equip these spaces with norms given by (\ref{eq:norms}).

In this section we propose and analyze a simple gradient-type method
for maximizing a convex function $f:\E\to \R$ on a compact set $\Q$:
\begin{equation}\tag{P}
\boxed{f^* = \max_{x\in \Q} f(x).}
\end{equation}

Unless explicitly stated otherwise, we will \emph{not} assume $f$ to
be differentiable. By $f'(x)$ we denote any subgradient of function
$f$ at $x$. By $\partial f(x)$ we denote its subdifferential.

At any point $x \in \Q$ we introduce some measure for the
first-order optimality conditions:
$$
\Delta(x) \eqdef \max\limits_{y \in \Q} \ve{f'(x)}{y-x}.
$$
Clearly, $\Delta(x) \geq 0$ and it vanishes only at the points where
the gradient $f'(x)$ belongs to the normal cone to the set ${\rm
Conv}(\Q)$ at $x$.\footnote{ The normal cone to the set
$\conv(\Q)$ at $x \in \Q$ is \emph{smaller} than the normal cone to
the set $\Q$. Therefore, the optimality condition $\Delta(x)=0$ is
\emph{stronger} than the standard one.}

We will use the following notation:
\begin{equation}
y(x) \indef \Arg \max_{y\in \Q} \ve{f'(x)}{y-x}.
\end{equation}

\subsection{Algorithm}
Consider the following simple algorithmic scheme.

\begin{algorithm}[h!]
\dontprintsemicolon \SetKwInOut{Input}{input}
\SetKwInOut{Output}{output}
 \Input{Initial iterate $x_0 \in \E$.}
 \Output{$x_k$, approximate solution of (P)}
 \Begin{ $k \longleftarrow 0$\\
 \Repeat{a stopping criterion is satisfied}
 {$x_{k+1} \in {\rm Arg}\max \{ f(x_k) +
\ve{f'(x_k)}{y-x_k} \suchthat y\in \Q\}$\\$k \longleftarrow k+1$}}
 \caption{Gradient scheme \label{algo1}}
\end{algorithm}

Note that for example in the special case $\Q=r\cdot \sph \eqdef r\cdot \{ x \in \E
\suchthat \| x \| = r\}$ or \\$\Q= r \cdot \ball\eqdef r \cdot \{x\in \E \suchthat \|x\|\leq r\},$ the main step of Algorithm~\ref{algo1} can
be written in an explicit form:
\begin{equation}\label{eq:main_step:ball_setting}
y(x_k) = x_{k+1} = r\frac{\G^{-1}f'(x_k)}{\|f'(x_k)\|_*}.
\end{equation}

\subsection{Analysis}

Our first convergence result is straightforward. Denote $\Delta_k \eqdef
\min\limits_{0 \leq i \leq k} \Delta(x_i)$.
\begin{thrm}\label{thm:main1}
Let sequence $\{ x_k \}_{k=0}^{\infty}$ be generated by
Algorithm~\ref{algo1} as applied to a convex function~$f$. Then the
sequence $\{ f(x_k) \}_{k=0}^{\infty}$ is monotonically increasing
and $\lim\limits_{k \to \infty} \Delta(x_k) = 0$. Moreover,
\begin{equation}\label{eq:thm_main1:Rate}
\Delta_k \leq \frac{f^* - f(x_0)}{k+1}.
\end{equation}
\end{thrm}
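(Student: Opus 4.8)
The plan is to extract everything from a single inequality that follows from convexity of $f$ together with the defining property of the algorithmic step. Fix $k$ and let $x_{k+1} = y(x_k) \in \Arg\max_{y\in\Q}\{f(x_k)+\ve{f'(x_k)}{y-x_k}\}$. By convexity of $f$ we have the lower bound
\begin{equation*}
f(x_{k+1}) \geq f(x_k) + \ve{f'(x_k)}{x_{k+1}-x_k} = f(x_k) + \Delta(x_k),
\end{equation*}
where the last equality is just the definition of $\Delta(x_k)$ and of $x_{k+1}$ as its maximizer. Since $\Delta(x_k)\geq 0$ (take $y=x_k$ in the max defining $\Delta$), this immediately gives that $\{f(x_k)\}$ is monotonically nondecreasing.

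Next I would telescope. Summing the inequality $f(x_{i+1}) - f(x_i) \geq \Delta(x_i)$ over $i=0,\dots,k$ yields
\begin{equation*}
f(x_{k+1}) - f(x_0) \geq \sum_{i=0}^{k} \Delta(x_i) \geq (k+1)\,\Delta_k,
\end{equation*}
using the definition $\Delta_k = \min_{0\le i\le k}\Delta(x_i)$ for the second step. Since $\Q$ is compact and $f$ is convex hence continuous, $f$ is bounded above on $\Q$ by $f^*$, so $f(x_{k+1}) \leq f^*$, giving
\begin{equation*}
\Delta_k \leq \frac{f^* - f(x_0)}{k+1},
\end{equation*}
which is exactly \eqref{eq:thm_main1:Rate}. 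Finally, to conclude $\lim_{k\to\infty}\Delta(x_k)=0$: the sequence $\{f(x_k)\}$ is nondecreasing and bounded above, hence convergent, so its increments $f(x_{k+1})-f(x_k)\to 0$; since $0\le \Delta(x_k)\le f(x_{k+1})-f(x_k)$, the squeeze theorem gives $\Delta(x_k)\to 0$.

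The only genuinely delicate point is justifying that the algorithmic step is well defined — that the linear functional $y\mapsto \ve{f'(x_k)}{y-x_k}$ actually attains its maximum over $\Q$ — which follows from compactness of $\Q$ and continuity of a linear functional, and that $f$ is finite and continuous on a neighborhood of $\Q$ so that subgradients $f'(x_k)$ exist (a convex function on a finite-dimensional space is continuous on the interior of its domain; here one assumes $\Q$ lies in that region, or equivalently that $f$ is finite everywhere as in the applications). Everything else is a one-line consequence of convexity plus telescoping, so there is no real obstacle beyond stating these standing assumptions carefully.
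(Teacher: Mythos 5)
Your proof is correct and follows essentially the same route as the paper's: the single convexity inequality $f(x_{k+1}) \geq f(x_k) + \Delta(x_k)$, monotonicity from $\Delta(x_k)\geq 0$, and telescoping against the bound $f^*$. You simply spell out the squeeze argument for $\Delta(x_k)\to 0$ and the well-definedness of the step, which the paper leaves implicit.
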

\begin{proof}
From convexity of $f$ we immediately get
\begin{equation*}
f(x_{k+1})  \geq f(x_k) + \ve{f'(x_k)}{x_{k+1}-x_k} = f(x_k) +
\Delta(x_k),
\end{equation*}
and therefore, $f(x_{k+1})\geq f(x_k)$ for all $k$. By summing up
these inequalities for $k=0,1,\dots,N-1$, we obtain
\[
f^* - f(x_0) \geq f(x_k) - f(x_0) \geq \sum\limits_{i=0}^k
\Delta(x_i),
\]
and the result follows.
\end{proof}

For a sharper analysis, we need some technical assumptions on $f$
and $\Q$.

\begin{ssmptn}\label{ass:1}
The norms of the subgradients of $f$ are bounded from below on $\Q$
by a positive constant, i.e.
\begin{equation}\label{eq:delta_f}
\delta_f \eqdef \min_{\substack{x\in \Q\\
f'(x)\in \partial f(x)}} \|f'(x)\|_* >0.
\end{equation}
\end{ssmptn}
This assumption is not too binding because of the following result.
\begin{prpstn}\label{prop:Ass2}
Assume that there exists a point $\bar x \not\in \Q$ such that $f(\bar x) <
f(x)$ for all $x \in \Q$. Then
\[
\delta_f \geq \left[ \min\limits_{x \in \Q} f(x) - f(\bar x) \right]
/ \left[ \max\limits_{x \in \Q} \| x - \bar x \| \right]
> 0.
\]
\end{prpstn}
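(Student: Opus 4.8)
The plan is to combine the subgradient inequality for convex functions with the Cauchy--Schwarz inequality for the dual pairing $\ve{\cdot}{\cdot}$. Fix an arbitrary $x \in \Q$ and an arbitrary subgradient $f'(x) \in \partial f(x)$. Since $f$ is convex, the subgradient inequality at $x$, evaluated at the point $\bar x$, reads $f(\bar x) \geq f(x) + \ve{f'(x)}{\bar x - x}$, which rearranges to
\[
\ve{f'(x)}{x - \bar x} \geq f(x) - f(\bar x).
\]

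Next I would bound the left-hand side from above. By the definition of the norms in (\ref{eq:norms}), the pairing obeys the Cauchy--Schwarz inequality $\ve{s}{y} \leq \|s\|_* \|y\|$ for all $s \in \E^*$ and $y \in \E$ (this is Cauchy--Schwarz with respect to the inner product induced by $\G$). Applying it with $s = f'(x)$ and $y = x - \bar x$ gives
\[
f(x) - f(\bar x) \leq \ve{f'(x)}{x - \bar x} \leq \|f'(x)\|_* \, \|x - \bar x\|.
\]
By the hypothesis $f(x) - f(\bar x) > 0$, so I may divide to get $\|f'(x)\|_* \geq (f(x) - f(\bar x)) / \|x - \bar x\|$, and then bound the numerator below by $\min_{y \in \Q} f(y) - f(\bar x)$ and the denominator above by $\max_{y \in \Q} \|y - \bar x\|$. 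Since this holds for every $x \in \Q$ and every admissible choice of subgradient, taking the infimum over the set appearing in (\ref{eq:delta_f}) yields $\delta_f \geq [\min_{y\in\Q} f(y) - f(\bar x)] / [\max_{y\in\Q}\|y - \bar x\|]$.

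It remains to argue that the right-hand side is strictly positive. Compactness of $\Q$ together with continuity of $f$ (a finite convex function on a finite-dimensional space is continuous) ensures that $\min_{y\in\Q} f(y)$ is attained, so by the hypothesis it is strictly larger than $f(\bar x)$; compactness also makes $\max_{y\in\Q}\|y-\bar x\|$ finite, and it is positive because $\bar x \notin \Q$. There is no genuine obstacle in this argument: the only points requiring a little care are invoking compactness to guarantee that the extrema are attained and finite, and checking that the division step is legitimate, which it is precisely because of the strict inequality assumed in the statement.
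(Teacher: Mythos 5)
Your proof is correct and follows essentially the same route as the paper's: the subgradient inequality $f(\bar x) \geq f(x) + \ve{f'(x)}{\bar x - x}$ combined with the Cauchy--Schwarz bound $\ve{f'(x)}{x-\bar x} \leq \|f'(x)\|_*\,\|x-\bar x\|$, followed by division. The paper states this in a single chain of inequalities; your additional remarks about compactness and attainment of the extrema are sensible bookkeeping but do not change the argument.
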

\begin{proof}
Because $f$ is convex, for any $x \in \Q$ we have
\[
0 < f(x) - f(\bar x) \leq \ve{f'(x)}{x - \bar x} \leq \| f'(x) \|_*
\cdot \| x - \bar x \|.
\]
\end{proof}

For our next convergence result we need to assume either strong
convexity of $f$ or strong convexity of the set $\conv(\Q)$.

\begin{ssmptn}\label{ass:2}
Function $f$ is \emph{strongly convex}, i.e. there exists a constant
$\sigma_f>0$ such that for any $x,y\in \E$
\begin{equation}\label{eq:convex}
f(y)\geq f(x) + \ve{f'(x)}{y-x} + \frac{\sigma_f}{2} \| y - x \|^2.
\end{equation}
\end{ssmptn}

Convex functions satisfy this inequality for \emph{convexity
parameter} $\sigma_f=0$.

\begin{ssmptn}\label{ass:3}
The set $\conv (\Q)$ is \emph{strongly convex}. This means that
there exists a constant $\sigma_{\Q}>0$ such that for any $x, y \in
\conv(\Q)$ and $\alpha \in [0,1]$ the following inclusion holds:
\begin{equation}\label{eq:set_strongly_convex}
\alpha x + (1 - \alpha) y + \frac{\sigma_{\Q}}{2}  \alpha(1-\alpha)\|
x - y \|^2 \cdot \sph\subset \conv( \Q ).
\end{equation}
\end{ssmptn}

Convex sets satisfy this inclusion for \emph{convexity parameter}
$\sigma_{\Q}=0$. It can be shown (see Appendix), that level sets of strongly
convex functions with Lipschitz continuous gradient are again strongly convex.
An example of such a function is the simple quadratic
$x \mapsto \|x\|^2$. The level sets of this function correspond to Euclidean balls of
varying sizes.

As we will see in Theorem~\ref{thm:main}, a better analysis of Algorithm~\ref{algo1} is possible if $\conv(\Q)$, the convex hull
of the feasible set of problem (P), is strongly convex. Note that in the case of the two formulations (\ref{eq:SPCA3}) and (\ref{eq:SPCA4}) of the
sparse PCA problem, the feasible set $\Q$ is the unit Euclidean sphere. Since the convex hull of the unit sphere is the unit ball, which is a strongly
convex set, the feasible set of our sparse PCA formulations satisfies Assumption~\ref{ass:3}.

In the special case $\Q=r \cdot
\sph$ for some $r > 0$, there is a simple proof that Assumption~\ref{ass:3} holds with
$\sigma_{\Q}=\frac{1}{r}$. Indeed, for any $x, y \in \E$ and $\alpha
\in [0,1]$, we have
\[
\begin{array}{rcl}
\| \alpha x + (1 - \alpha) y \|^2 & = & \alpha^2 \| x \|^2 + (1 -
\alpha)^2 \| y \|^2 + 2 \alpha (1 - \alpha)
\ve{Gx}{y}\\
\\
& = & \alpha \| x \|^2 + (1 - \alpha) \| y \|^2 - \alpha(1 - \alpha)
\| x - y \|^2.
\end{array}
\]
Thus, for $x, y \in r\cdot \sph$ we obtain:
\[
\| \alpha x + (1 - \alpha) y \| = \left[ r^2 - \alpha(1 - \alpha) \|
x - y \|^2 \right]^{1/2} \leq r  - \frac{1}{2r} \alpha(1 - \alpha)
\| x - y \|^2.
\]
Hence, we can take $\sigma_{\Q} = \frac{1}{r}$.

\bigskip
The relevance of Assumption \ref{ass:3} is
justified by the following technical observation.
\begin{prpstn}\label{prop:Ass3}
Let Assumption \ref{ass:3} be satisfied. Then for any $x \in Q$ the following holds:
\begin{equation}\label{eq:delta_from_strong_conv_of_Q}
\Delta(x) \geq \frac{\sigma_{\Q}}{2}  \| f'(x) \|_* \cdot \| y(x) -
x \|^2.
\end{equation}
\end{prpstn}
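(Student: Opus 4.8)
The plan is to exploit the definition $\Delta(x) = \ve{f'(x)}{y(x)-x}$ together with the strong convexity inclusion \eqref{eq:set_strongly_convex} applied at the midpoint $\alpha = \tfrac{1}{2}$ of the segment joining $x$ and $y(x)$. First I would set $\alpha = \tfrac12$, so that Assumption~\ref{ass:3} guarantees that the ball centered at $\tfrac12(x + y(x))$ of radius $\tfrac{\sigma_\Q}{8}\|x - y(x)\|^2$ is contained in $\conv(\Q)$. In particular, the point
\[
w \eqdef \tfrac12\bigl(x + y(x)\bigr) + \frac{\sigma_\Q}{8}\|x - y(x)\|^2 \cdot \frac{\G^{-1}f'(x)}{\|f'(x)\|_*}
\]
lies in $\conv(\Q)$, since the displacement from the center has norm exactly $\tfrac{\sigma_\Q}{8}\|x-y(x)\|^2$.

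Next I would use that $\Delta(x) = \max_{y\in\Q}\ve{f'(x)}{y-x} = \max_{y\in\conv(\Q)}\ve{f'(x)}{y-x}$ (the linear functional attains the same maximum over a set and over its convex hull), so that $\ve{f'(x)}{w - x} \leq \Delta(x)$. Now I expand $\ve{f'(x)}{w-x}$: writing $w - x = \tfrac12(y(x) - x) + \frac{\sigma_\Q}{8}\|x-y(x)\|^2 \G^{-1}f'(x)/\|f'(x)\|_*$, the first term contributes $\tfrac12\ve{f'(x)}{y(x)-x} = \tfrac12\Delta(x)$, and the second term contributes $\frac{\sigma_\Q}{8}\|x-y(x)\|^2 \ve{f'(x)}{\G^{-1}f'(x)}/\|f'(x)\|_* = \frac{\sigma_\Q}{8}\|x-y(x)\|^2\|f'(x)\|_*$, using $\ve{f'(x)}{\G^{-1}f'(x)} = \|f'(x)\|_*^2$ from \eqref{eq:norms}. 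Hence $\tfrac12\Delta(x) + \tfrac{\sigma_\Q}{8}\|f'(x)\|_*\|x-y(x)\|^2 \leq \Delta(x)$, which rearranges to $\Delta(x) \geq \tfrac{\sigma_\Q}{4}\|f'(x)\|_*\|y(x)-x\|^2$.

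This gives the claimed bound with an extra factor of $2$ to spare, so either the paper uses a slightly looser route or the constant in \eqref{eq:delta_from_strong_conv_of_Q} can be sharpened; in any case the inequality as stated follows a fortiori. The main subtlety I anticipate is the bookkeeping around which set the maximum in $\Delta(x)$ is taken over: $y(x)$ is defined as an argmax over $\Q$, but the strongly convex point $w$ lies in $\conv(\Q)$, not necessarily in $\Q$, so one must invoke that a linear functional's maximum over $\Q$ equals its maximum over $\conv(\Q)$ to conclude $\ve{f'(x)}{w-x}\leq\Delta(x)$. Everything else is a direct substitution using the norm identity and the inclusion from Assumption~\ref{ass:3}.
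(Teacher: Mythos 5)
Your setup is sound---the inclusion from Assumption~\ref{ass:3}, the observation that a linear functional has the same maximum over $\Q$ and over $\conv(\Q)$, and the identity $\ve{f'(x)}{\G^{-1}f'(x)} = \|f'(x)\|_*^2$ are all used exactly as in the paper's proof. But the final step contains a direction error. Fixing $\alpha=\tfrac12$ you correctly derive
\[
\tfrac12\Delta(x) + \tfrac{\sigma_\Q}{8}\,\|f'(x)\|_*\,\|y(x)-x\|^2 \;\leq\; \Delta(x),
\qquad\text{i.e.}\qquad
\Delta(x) \;\geq\; \tfrac{\sigma_\Q}{4}\,\|f'(x)\|_*\,\|y(x)-x\|^2 .
\]
This is \emph{weaker} than the claimed bound (\ref{eq:delta_from_strong_conv_of_Q}), not stronger: a lower bound with constant $\sigma_\Q/4$ does not imply one with constant $\sigma_\Q/2$, so the statement does not ``follow a fortiori'' and the constant in the proposition cannot be sharpened by your computation. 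The loss comes precisely from evaluating at $\alpha=\tfrac12$.

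The repair is to leave $\alpha$ free, which is what the paper does. With $w_\alpha = x + \alpha(y(x)-x) + \tfrac{\sigma_\Q}{2}\alpha(1-\alpha)\|y(x)-x\|^2\,\G^{-1}f'(x)/\|f'(x)\|_*$, the same computation gives $\alpha\Delta(x) + \tfrac{\sigma_\Q}{2}\alpha(1-\alpha)\|f'(x)\|_*\|y(x)-x\|^2 \leq \Delta(x)$; dividing by $(1-\alpha)$ for $\alpha<1$ yields $\Delta(x) \geq \tfrac{\sigma_\Q}{2}\,\alpha\,\|f'(x)\|_*\,\|y(x)-x\|^2$, and letting $\alpha\to 1$ recovers the stated constant. (The degenerate case $f'(x)=0$ is trivial, since both sides vanish.) With that one-line change your argument coincides with the paper's, which phrases the same inequality as $\ve{f'(x)}{y_\alpha - y(x)}\leq 0$ rather than $\ve{f'(x)}{w_\alpha - x}\leq \Delta(x)$.
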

\begin{proof}
Fix an arbitrary $x \in \Q$. Note that
\[
\ve{f'(x)}{y(x) - y} \geq 0, \quad y \in \conv(\Q).
\]
We will use this inequality for
\[
y = y_{\alpha} \eqdef x + \alpha (y(x) - x) + \frac{\sigma_{\Q}}{2}  \alpha (1
- \alpha)  \| y(x) - x \|^2 \cdot \frac{G^{-1} f'(x)}{\|
f'(x) \|_*}, \; \alpha \in [0,1].
\]
In view of Assumption \ref{ass:3}, $y_{\alpha} \in \conv(\Q)$.
Therefore,
\[
0 \geq \ve{f'(x)}{y_{\alpha} - y(x)} = (1-\alpha) \ve{f'(x)}{x
-y(x)}+ \frac{\sigma_{\Q}}{2}  \alpha (1 - \alpha)  \| y(x) - x \|^2
\cdot \| f'(x) \|_*.
\]
Since $\alpha$ is an arbitrary value from $[0,1]$, the result follows.
\end{proof}

\bigskip
We are now ready to refine our analysis of Algorithm~1.

\begin{thrm}[Convergence]\label{thm:main}
Let $f$ be convex and let Assumption \ref{ass:1} and at least one of
Assumptions \ref{ass:2} and \ref{ass:3} be satisfied. If $\{x_k\}$
is the sequence of points generated by Algorithm 1, then
\begin{equation}\label{eq:thm1proof}
\sum\limits_{k=0}^N \|x_{k+1}-x_k\|^2 \leq  \frac{2(f^* -
f(x_0))}{\sigma_{\Q} \delta_f + \sigma_f}.
\end{equation}
\end{thrm}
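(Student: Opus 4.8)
The plan is to combine the basic per-step descent inequality from the proof of Theorem~\ref{thm:main1} with a quadratic lower bound on $\Delta(x_k)$ in terms of $\|x_{k+1}-x_k\|^2$. Recall that from convexity we already have $f(x_{k+1}) - f(x_k) \geq \ve{f'(x_k)}{x_{k+1}-x_k} = \Delta(x_k)$, and since $x_{k+1} = y(x_k)$ by the definition of the algorithm, the quantity $\|x_{k+1}-x_k\|$ is exactly $\|y(x_k)-x_k\|$. So it suffices to show, under Assumption~\ref{ass:1} together with at least one of Assumptions~\ref{ass:2} and \ref{ass:3}, that
\[
\Delta(x_k) \;\geq\; \frac{\sigma_{\Q}\delta_f + \sigma_f}{2}\,\|x_{k+1}-x_k\|^2 ,
\]
for then telescoping $f(x_{k+1})-f(x_k)\geq \Delta(x_k)$ over $k=0,\dots,N$ and bounding the telescoped sum by $f^*-f(x_0)$ gives the claim.

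For the strongly-convex-set part, I would invoke Proposition~\ref{prop:Ass3}, which gives $\Delta(x)\geq \tfrac{\sigma_{\Q}}{2}\|f'(x)\|_*\cdot\|y(x)-x\|^2$, and then bound $\|f'(x)\|_*\geq \delta_f$ using Assumption~\ref{ass:1}; this yields the $\tfrac{\sigma_{\Q}\delta_f}{2}\|x_{k+1}-x_k\|^2$ contribution. For the strongly-convex-function part, I would use strong convexity of $f$ at the pair $(x_k, x_{k+1})$: writing \eqref{eq:convex} with $x=x_k$, $y=x_{k+1}$ and using $f(x_{k+1})-f(x_k)\geq \ve{f'(x_k)}{x_{k+1}-x_k}$ one obtains $\ve{f'(x_k)}{x_{k+1}-x_k} \geq \ve{f'(x_k)}{x_{k+1}-x_k} + \tfrac{\sigma_f}{2}\|x_{k+1}-x_k\|^2$ — which is not directly what I want, so instead I would argue: by the descent inequality $f(x_{k+1})-f(x_k)\geq \Delta(x_k)$, and separately by strong convexity $f(x_{k+1})-f(x_k) - \ve{f'(x_k)}{x_{k+1}-x_k} \geq \tfrac{\sigma_f}{2}\|x_{k+1}-x_k\|^2$; since $\ve{f'(x_k)}{x_{k+1}-x_k}=\Delta(x_k)$, this reads $f(x_{k+1})-f(x_k)\geq \Delta(x_k) + \tfrac{\sigma_f}{2}\|x_{k+1}-x_k\|^2$. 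Thus in all cases $f(x_{k+1})-f(x_k) \geq \Delta(x_k) + \tfrac{\sigma_f}{2}\|x_{k+1}-x_k\|^2 \geq \tfrac{\sigma_{\Q}\delta_f}{2}\|x_{k+1}-x_k\|^2 + \tfrac{\sigma_f}{2}\|x_{k+1}-x_k\|^2$, where the first term is dropped when only Assumption~\ref{ass:2} holds and the second when only Assumption~\ref{ass:3} holds (each dropped term being nonnegative, with the corresponding parameter zero by convention).

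Summing $f(x_{k+1})-f(x_k)\geq \tfrac{\sigma_{\Q}\delta_f+\sigma_f}{2}\|x_{k+1}-x_k\|^2$ over $k=0,\dots,N$ telescopes the left side to $f(x_{N+1})-f(x_0)\leq f^*-f(x_0)$, giving \eqref{eq:thm1proof}. The one point requiring a little care — and the main (mild) obstacle — is the bookkeeping of which hypothesis supplies which term: Proposition~\ref{prop:Ass3} is only available under Assumption~\ref{ass:3}, so when only Assumption~\ref{ass:2} is assumed one must rely solely on the strong-convexity estimate, and conversely; writing both bounds additively with the convention $\sigma_f=0$ (resp. $\sigma_{\Q}=0$) for merely convex $f$ (resp. merely convex $\conv(\Q)$) lets a single display cover both cases, and Assumption~\ref{ass:1} ($\delta_f>0$) is what keeps the $\sigma_{\Q}$-term from degenerating.
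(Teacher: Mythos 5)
Your proof is correct and follows essentially the same route as the paper: the per-step bound $f(x_{k+1})-f(x_k)\geq \Delta(x_k)+\tfrac{\sigma_f}{2}\|x_{k+1}-x_k\|^2$ combined with Proposition~\ref{prop:Ass3} and Assumption~\ref{ass:1} to get $\Delta(x_k)\geq\tfrac{\sigma_{\Q}\delta_f}{2}\|x_{k+1}-x_k\|^2$, followed by telescoping. The brief detour where you first wrote down a vacuous inequality before correcting it does not affect the final argument, which matches the paper's.
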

\begin{proof}
Indeed, in view of our assumptions and Proposition \ref{prop:Ass3},
we have
\[
f(x_{k+1}) - f(x_k) \geq  \Delta(x_k) + \frac{\sigma_f}{2}
\|x_{k+1}-x_k\|^2\geq \frac{1}{2} (\sigma_{\Q} \delta_f + \sigma_f)
\|x_{k+1}-x_k\|^2.
\]
\end{proof}

We cannot in general guarantee that the algorithm will converge to a
unique local maximizer. In particular, if started from a local
minimizer, the method will not move away from this point. However,
the above statement guarantees that the set of its limit points is
connected and all of them satisfy the first-order optimality
condition.

\subsection{Maximization with spherical constraints}
\label{sec:vector_setting}

Consider $\E=\E^*=\R^p$ with $\G=I_p$ and $\ve{s}{x}=\sum_i s_i x_i$,
and let \[\Q=r\cdot \sphere{p} = \{x\in \R^p \suchthat \|x\| = r\}.\] Problem (P)
takes on the form:
\[
\boxed{f^* = \max_{\substack{x\in r \cdot \sphere{p}}} f(x).}
\]
Since $\Q$ is strongly convex ($\sigma_\Q=\tfrac{1}{r}$),  Theorem
\ref{thm:main} is meaningful for any convex function $f$
($\sigma_f\geq 0$). We have already noted (see (\ref{eq:main_step:ball_setting})) that the main step of Algorithm~\ref{algo1} can
be written down explicitly. Note that the single-unit sparse PCA formulations (\ref{eq:SPCA3}) and (\ref{eq:SPCA4}) conform to this setting. The following examples illustrate the connection to classical algorithms.

\begin{xmpl}[Power method] In the special case of a quadratic objective function $f(x)~=~\tfrac{1}{2}x^TCx$ for some $C \in \Scone^p_{++}$ on the unit sphere ($r=1$), we have
\[f^*=\tfrac{1}{2}\lambda_{\text{max}}(C),\] and Algorithm~\ref{algo1} is equivalent to the
\emph{power iteration method} for computing the largest eigenvalue of $C$ (\citet{Golub96}).
Hence for $\Q=\sphere{p}$, we can think of our scheme as a
generalization of the power method. Indeed, our algorithm performs the following iteration:
\[x_{k+1} = \frac{Cx_k}{\|Cx_k\|}, \quad k \geq 0.\]
Note that both $\delta_{f}$ and  $\sigma_{f}$  are equal to the smallest eigenvalue of $C$, and hence the right-hand side of (\ref{eq:thm1proof}) is equal to
\begin{equation}\label{eq:case1:estimate}\frac{\lambda_{\text{max}}(C) - x_0^T C x_0}{2\lambda_{\text{min}}(C)}.\end{equation}
\end{xmpl}

\begin{xmpl}[Shifted power method] If $C$ is not positive semidefinite in the previous example, the objective function is not
convex and our results are not applicable. However, this complication can be circumvented by instead running the algorithm with the shifted quadratic function
\[\hat{f}(x) = \frac{1}{2}x^T (C + \level I_p) x,\]
where $\level>0$ satisfies $\hat{C} = \level I_p + C \in \Scone^p_{++}$. On the feasible set, this change only adds a constant term to the objective function. The method, however, produces different sequence of iterates. Note that the constants $\delta_f$ and $\sigma_f$ are also affected and, correspondingly, the estimate (\ref{eq:case1:estimate}).
\end{xmpl}

\subsection{Maximization with orthonormality constraints} \label{sec:matrix_setting}

Consider $\E=\E^*=\R^{p\times m}$, the space of $p\times m$ real matrices, with $m\leq p$. Note that for $m=1$ we recover the setting of the previous section. We assume this space is equipped with the trace inner product: $\ve{X}{Y}=\trace(X^TY)$. The induced norm, denoted by $\|X\|_F\eqdef\ve{X}{X}^{1/2}$, is the Frobenius norm (we let $\G$ be the identity operator).
We can now consider various feasible sets, the simplest being a ball or a sphere. Due to nature of applications in this paper, let us concentrate on the situation when $\Q$ is a special subset of the sphere with radius $r= \sqrt{m}$, the Stiefel manifold $\St{m}{p}$:
\[
\Q=\St{m}{p} = \{X \in \R^{p\times m} \suchthat X^TX = I_m\}.
\]
Problem (P) then takes on the following form:
\[
\boxed{f^* = \max_{X\in \St{m}{p}} f(X).}
\]
Note that $\conv(\Q)$ is not strongly convex ($\sigma_\Q = 0$), and
hence Theorem \ref{thm:main} is meaningful only if $f$ is strongly
convex ($\sigma_f > 0$). At every iteration, the algorithm needs to maximize a linear function over the Stiefel manifold. The following standard result shows how this can be done.

\begin{prpstn}
\label{thm:iteration_on_stiefel}
Let $C \in \R^{p \times m}$, with $m\leq p$, and denote by $\sigma_i(C)$, $i=1,\dots,m$, the singular values of $C$. Then
\begin{equation}
\label{eq:iteration_stiefel} \max_{X\in \St{m}{p}} \ve{C}{X}  = \trace[(C^T C)^{1/2}] = \sum_{i=1}^m \sigma_i(C),
\end{equation}
and a maximizer $X^*$ is given by the $U$ factor in the polar decomposition of $C$:
\[C = U P, \quad U \in \St{m}{p}, \; P\in \Scone_+^m.\]
If $C$ is of full rank, then we can take $X^* = C (C^TC)^{-1/2}$.
\end{prpstn}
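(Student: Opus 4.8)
The plan is to establish the two equalities in \eqref{eq:iteration_stiefel} and then exhibit the maximizer. First I would reduce the linear functional to a known quantity: writing $\ve{C}{X} = \trace(C^TX)$ and introducing the singular value decomposition $C = V \Sigma W^T$ with $V \in \St{m}{p}$, $W \in \St{m}{m}$ orthogonal, and $\Sigma = \Diag(\sigma_1,\dots,\sigma_m)$, one gets $\trace(C^TX) = \trace(W\Sigma V^T X) = \trace(\Sigma \, V^T X W)$. Since $X \mapsto V^T X W$ is a bijection of $\St{m}{p}$ onto itself (both $V$ and $W$ have orthonormal columns, $V$ being $p\times m$ and $W$ being $m\times m$ orthogonal), maximizing over $X \in \St{m}{p}$ is the same as maximizing $\trace(\Sigma Y)$ over $Y \in \St{m}{p}$. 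Now $\trace(\Sigma Y) = \sum_{i=1}^m \sigma_i y_{ii}$, and since the columns of $Y$ are orthonormal, in particular each has unit norm, we have $|y_{ii}| \leq 1$ for every $i$; as all $\sigma_i \geq 0$, this yields $\trace(\Sigma Y) \leq \sum_i \sigma_i$, with equality when $y_{ii}=1$ for all $i$, e.g. $Y = \begin{psmallmatrix} I_m \\ 0 \end{psmallmatrix}$. This proves $\max_{X}\ve{C}{X} = \sum_i \sigma_i(C)$. The identity $\sum_i \sigma_i(C) = \trace[(C^TC)^{1/2}]$ is immediate: $C^TC = W\Sigma^2 W^T$, so $(C^TC)^{1/2} = W\Sigma W^T$ and its trace is $\sum_i \sigma_i$.

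Next I would identify the maximizer. Tracing back through the reduction, the optimal $Y = \begin{psmallmatrix} I_m \\ 0 \end{psmallmatrix}$ corresponds to $X^* = V Y W^T \cdot$(appropriate bookkeeping)$\, = V W^T$, so I claim $X^* = VW^T$ is optimal; indeed $\trace(C^T X^*) = \trace(W\Sigma V^T V W^T) = \trace(\Sigma) = \sum_i \sigma_i$, and $X^* \in \St{m}{p}$ since $(VW^T)^T(VW^T) = W V^T V W^T = W W^T = I_m$. It remains to check that $VW^T$ is exactly the $U$-factor of a polar decomposition of $C$: take $U = VW^T$ and $P = W\Sigma W^T$; then $UP = VW^T W \Sigma W^T = V\Sigma W^T = C$, $U \in \St{m}{p}$, and $P = (C^TC)^{1/2} \in \Scone_+^m$. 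Finally, if $C$ has full column rank then all $\sigma_i > 0$, so $P = (C^TC)^{1/2}$ is invertible and $X^* = U = CP^{-1} = C(C^TC)^{-1/2}$, giving the last claim.

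I do not anticipate a genuine obstacle here — this is a standard fact — but the one point that requires a little care is the bijection step: one must verify that pre- and post-multiplication by $V$ and $W^T$ genuinely maps $\St{m}{p}$ onto itself (not merely into itself), which is why it matters that $V$ has orthonormal columns and $W$ is square orthogonal; composing with the inverse map $X \mapsto V X W$ (noting $V^T$ is a left inverse of $V$) confirms surjectivity. The other mild subtlety is uniqueness of the polar factor: when $C$ is rank-deficient the $U$-factor is not unique, which is consistent with the statement only claiming "a maximizer is given by" the $U$-factor rather than "the maximizer is."
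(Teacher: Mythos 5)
Your overall route is the same as the paper's --- reduce via an SVD of $C$ to maximizing $\sum_i \sigma_i y_{ii}$ over the Stiefel manifold and exhibit $VW^T$ as a maximizer --- but the one step you yourself flag as delicate is wrong as written. You take the \emph{thin} SVD, so $V\in\R^{p\times m}$, and then claim that $X\mapsto V^TXW$ is a bijection of $\St{m}{p}$ onto itself. It is not: $V^TXW$ is an $m\times m$ matrix, not a $p\times m$ one, and for $m<p$ the map does not even land in the set of matrices with orthonormal columns --- for instance, if the columns of $X$ are chosen orthogonal to the column space of $V$ (possible whenever $p\geq 2m$), then $V^TX=0$. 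Your proposed inverse $X\mapsto VXW$ also fails, because $VV^T\neq I_p$ for a thin $V$; $V^T$ is a left inverse of $V$ but not a right inverse. Consequently ``maximizing over $X$ is the same as maximizing $\trace(\Sigma Y)$ over $Y\in\St{m}{p}$'' does not follow, and your optimal candidate $Y=\left(\begin{smallmatrix}I_m\\0\end{smallmatrix}\right)$ has the wrong shape to be a value of $V^TXW$.

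The damage is local and can be repaired in either of two ways. (i) Do what the paper does: use the \emph{full} SVD, with $V$ a $p\times p$ orthogonal matrix and $\Sigma\in\R^{p\times m}$; then $X\mapsto V^TXW$ genuinely is a bijection of $\St{m}{p}$ onto itself and your argument goes through verbatim. (ii) Keep the thin SVD but drop the bijection claim entirely: for $X\in\St{m}{p}$ the matrix $XW$ again lies in $\St{m}{p}$, so its columns are unit vectors, and Cauchy--Schwarz gives $|(V^TXW)_{ii}|=|v_i^T(XW)e_i|\leq 1$; this yields the upper bound $\sum_i\sigma_i(C)$, and your direct verification that $X^*=VW^T$ attains it --- together with your check that $VW^T$ is the polar $U$-factor via $C=(VW^T)(W\Sigma W^T)$ and the full-rank formula --- finishes the proof. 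Everything else in your write-up (the identity $\trace[(C^TC)^{1/2}]=\sum_i\sigma_i(C)$ and the remark on non-uniqueness of the polar factor in the rank-deficient case) is correct.
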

\begin{proof} Existence of the polar factorization in the nonsquare case is covered by Theorem 7.3.2 in \citet{HJ85}.
Let $C=V\Sigma W^T$ be the singular value decomposition of $A$; that is, $V$ is $p\times p$ orthonormal, $W$ is $m\times m$ orthonormal, and $\Sigma$ is $p\times m$ diagonal with values $\sigma_i(A)$ on the diagonal. Then
\begin{align*}\max_{X \in \St{m}{p}} \ve{C}{X} &= \max_{X \in \St{m}{p}} \ve{V\Sigma W^T}{X}\\
&= \max_{X \in \St{m}{p}} \trace \Sigma (W^TX^T V)\\
&= \max_{Z \in \St{m}{p}} \trace \Sigma Z^T= \max_{Z \in \St{m}{p}} \sum_{i=1}^m \sigma_i(C) z_{ii} \leq \sum_{i}^m \sigma_i(C).
\end{align*}

The third equality follows since the function $X \mapsto V^T X W$ maps $\St{m}{p}$ onto itself. It remains to note that \begin{align*}\ve{C}{U} = \trace P = \sum_{i} \lambda_i(P) = \sum_{i} \sigma_i(P) = \trace (P^T P)^{1/2} = \trace (C^T C)^{1/2} = \sum_i \sigma_i(C),\end{align*}
Finally, in the full rank case we have $\ve{C}{X^*} = \trace C^T C (C^TC)^{-1/2} = \trace (C^T C)^{1/2}$.

\end{proof}

In the sequel, the symbol $\uf(C)$ will be used to denote the $U$ factor of the polar decomposition of matrix $C \in \R^{p \times m}$, or equivalently, $\uf(C)=C(C^TC)^{-1/2}$ if $C$ is of full rank. In view of the above result, the main step of Algorithm~\ref{algo1} can be written in the form
\begin{equation}\label{eq:main_step:matrix_setting}x_{k+1}=\uf(f'(x_k)).\end{equation}

Note that the block sparse PCA formulations (\ref{eq:block_spca2}) and (\ref{eq:block_spca4}) conform to this setting. Here is one more example:

\begin{xmpl}[Rectangular Procrustes Problem] Let $C,X\in \R^{p\times m}$ and $D\in \R^{p\times p}$ and consider the following problem:
\begin{equation}\label{eq:case2:Frob} \min \{\|C-DX\|^2_F \suchthat X^T X=I_m\}.\end{equation}
Since $\|C-DX\|_F^2 = \|C\|_F^2 + \ve{DX}{DX} - 2\ve{CD}{X}$, by a similar shifting technique as in the previous example we can cast problem (\ref{eq:case2:Frob}) in the following form
\[\max \{\level\|X\|_F^2 - \ve{DX}{DX}+2\ve{CD}{X} \suchthat X^TX = I_m\}.\]
For $\level>0$ large enough, the new objective function will be strongly convex. In this case our algorithm becomes similar to the gradient
method proposed by \citet{FNVD08}.

The standard Procrustes problem in the literature is a special case of (\ref{eq:case2:Frob}) with $p=m$.
\end{xmpl}

\bigskip
\section{Algorithms for sparse PCA}\label{sec:sparse_pca_algorithms}

The application of our general method (Algorithm~\ref{algo1}) to the four sparse
PCA formulations of Section \ref{sec:spca}, i.e., (\ref{eq:SPCA3}), (\ref{eq:SPCA4}), (\ref{eq:block_spca2}) and (\ref{eq:block_spca4}), leads to Algorithms \ref{algo2}, \ref{algo3}, \ref{algo4} and \ref{algo5} below, that provide a locally optimal pattern of sparsity for a matrix $Z \in \sphereprod{n}{m}$.\footnote{This section discusses the general block sparse PCA problem. The single-unit case corresponds to the particular case $m=1$.} This pattern is defined as a matrix $P\in \R^{n\times m}$ such that $p_{ij} = 0$ if the loading $z_{ij}$ is active and $p_{ij} = 1$ otherwise. So $P$ is an indicator of the coefficients of $Z$ that are zeroed by our method. The computational complexity of the single-unit algorithms (Algorithms \ref{algo2} and \ref{algo3}) is $\mathcal{O}(n p)$ operations per iteration. The block algorithms (Algorithms \ref{algo4} and \ref{algo5}) have complexity $\mathcal{O}(n p m)$ per iteration.

\subsection{Methods for pattern-finding}

\begin{algorithm}[h]
\dontprintsemicolon \SetKwInOut{Input}{input}
\SetKwInOut{Output}{output}
 \Input{Data matrix $A \in \R^{p \times n}$\\ Sparsity-controlling parameter
$\sparsity \geq 0$\\ Initial iterate $x \in \sphere{p}$}
 \Output{A locally optimal sparsity pattern $P$}
 \Begin{  \Repeat{a stopping criterion is satisfied}{$x \longleftarrow
\sum_{i=1}^n [|a_i^T x| -
 \sparsity]_+ \signum(a_i^T x) a_i$\\
 $x \longleftarrow \frac{x}{\|x\|}$}
Construct vector $P \in \R^{n}$ such that
$\left\{\begin{array}{ll}
    p_{i}=0 & \text{ if }|a_i^T x| > \sparsity\\
    p_{i}=1 & \text{ otherwise.}
\end{array}\right.$}
 \caption{Single-unit sparse PCA method based on the $\ell_1$-penalty
(\ref{eq:SPCA3}) \label{algo2}}
\end{algorithm}

\begin{algorithm}[h]
\dontprintsemicolon \SetKwInOut{Input}{input}
\SetKwInOut{Output}{output}
 \Input{Data matrix $A \in \R^{p \times n}$\\ Sparsity-controlling parameter
$\sparsity \geq 0$\\ Initial iterate $x \in \sphere{p}$}
 \Output{A locally optimal sparsity pattern $P$}
 \Begin{ \Repeat{a stopping criterion is satisfied}{$x \longleftarrow
\sum_{i=1}^n [\signum((a_i^T x)^2 -
 \sparsity)]_+ \; a_i^T x \; a_i$\\
 $x \longleftarrow \frac{x}{\|x\|}$}
Construct vector $P \in \R^{n}$ such that
$\left\{\begin{array}{ll}
    p_{i}=0 & \text{ if } (a_i^T x)^2 > \sparsity\\
    p_{i}=1 & \text{ otherwise.}
\end{array}\right.$}
 \caption{Single-unit sparse PCA algorithm based on the $\ell_0$-penalty
(\ref{eq:SPCA4}) \label{algo3}}
\end{algorithm}

\begin{algorithm}[h]
\dontprintsemicolon \SetKwInOut{Input}{input}
\SetKwInOut{Output}{output}
 \Input{Data matrix $A \in \R^{p \times n}$\\
 Sparsity-controlling parameter $\sparsity \geq 0$\\
Initial iterate $X \in \St{m}{p}$}
 \Output{A locally optimal sparsity pattern $P$}
 \Begin{ \Repeat{a stopping criterion is satisfied}{
\For{$j= 1, \ldots, m$}{$x_j \longleftarrow
\sum_{i=1}^n  [|a_i^T x_j| -
 \sparsity]_+ \signum(a_i^T x) a_i$\\}
 $X \longleftarrow \uf(X)$}
Construct matrix $P \in \R^{n \times m}$ such that
$\left\{\begin{array}{ll}
    p_{ij}=0 & \text{ if }|a_i^T x_j| > \sparsity\\
    p_{ij}=1 & \text{ otherwise.}
\end{array}\right.$}
 \caption{Block Sparse PCA algorithm based on the $\ell_1$-penalty
(\ref{eq:block_spca2}) \label{algo4}}
\end{algorithm}

\begin{algorithm}[h]
\dontprintsemicolon \SetKwInOut{Input}{input}
\SetKwInOut{Output}{output}
 \Input{Data matrix $A \in \R^{p \times n}$\\
Sparsity-controlling parameter $\sparsity \geq 0$\\
Initial iterate $X \in \St{m}{p}$}
 \Output{A locally optimal sparsity pattern $P$}
 \Begin{ \Repeat{a stopping criterion is satisfied}{
\For{$j= 1, \ldots, m$}{$x_j \longleftarrow
\sum_{i=1}^n [\signum((a_i^T x_j)^2 -
 \sparsity)]_+\;  a_i^T x_j \; a_i$\\}
 $X \longleftarrow \uf(X)$}
Construct matrix $P \in \R^{n \times m}$ such that
$\left\{\begin{array}{ll}
    p_{ij}=0 & \text{ if }(a_i^T x_j)^2 > \sparsity\\
    p_{ij}=1 & \text{ otherwise.}
\end{array}\right.$}
 \caption{Block Sparse PCA algorithm based on the $\ell_0$-penalty
(\ref{eq:block_spca4}) \label{algo5}}
\end{algorithm}

\subsection{Post-processing}

Once a ``good'' sparsity pattern $P$ has been identified, the active entries of $Z$ still have to be filled. To this end, we consider the optimization problem,
\begin{equation}
\label{eq:opt_imposed_sparse_pattern}
(X^*,Z^*) \eqdef
\arg\max_{\substack{X \in \St{m}{p}\\ Z \in \sphereprod{n}{m} \\ Z_P=0}}
\trace(X^T A Z N),
\end{equation}
where $Z_P$ denotes the entries of $Z$ that are constrained to zero and $N=\Diag(\mu_1,\ldots,\mu_m)$ with strictly positive $\mu_i$. Problem (\ref{eq:opt_imposed_sparse_pattern}) assigns the active part of the loading vectors $Z$ to maximize the variance explained by the resulting components. By $Z_{\bar{P}}$, we refer to the complement of $Z_P$, i.e., to the active entries of $Z$. In the single-unit case $m=1$, an explicit solution of (\ref{eq:opt_imposed_sparse_pattern}) is available,
\begin{equation}
\label{eq:solution_postprocessing_single_unit}
\begin{array}{ll}
 X^*&=u, \\
 Z^*_{\bar{P}}&= v \; \text { and } \; Z^*_{P}=0,
\end{array}
\end{equation}
where $\sigma u v^T$ with $\sigma >0$, $u \in \ball^p$ and $v \in
\ball^{\card{\bar{P}}}$ is a rank one singular value decomposition
of the matrix $A_{\bar{P}}$, that corresponds to the submatrix of
$A$ containing the columns related to the active entries.

Although an exact solution of (\ref{eq:opt_imposed_sparse_pattern}) is hard to compute in the block case $m>1$, a local maximizer can be efficiently computed by optimizing alternatively with respect to one variable while keeping the other ones fixed. The following lemmas provide an explicit solution to each of these subproblems.
\begin{lmm} For a fixed $Z \in \sphereprod{n}{m}$, a solution $X^*$ of
\[\max_{X \in \St{m}{p}}\trace(X^T A Z N)\]
is provided by the $U$ factor of the polar decomposition of the product $A Z N$.
\end{lmm}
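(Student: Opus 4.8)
The plan is to recognize that, for fixed $Z$ and $N$, the objective $\trace(X^T AZN)$ is a \emph{linear} functional of $X$, so the subproblem is precisely the one already solved by Proposition~\ref{thm:iteration_on_stiefel}. First I would rewrite $\trace(X^T AZN) = \ve{AZN}{X}$, using the definition $\ve{X}{Y}=\trace(X^TY)$ of the trace inner product on $\R^{p\times m}$ together with its symmetry. Setting $C \eqdef AZN \in \R^{p\times m}$, the problem becomes $\max_{X\in\St{m}{p}} \ve{C}{X}$.

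Then I would simply invoke Proposition~\ref{thm:iteration_on_stiefel}: the maximum equals $\trace[(C^TC)^{1/2}] = \sum_{i=1}^m \sigma_i(C)$ and is attained at $X^* = \uf(C)$, the $U$-factor of the polar decomposition $C = UP$ with $U\in\St{m}{p}$ and $P\in\Scone_+^m$ (equivalently $X^* = C(C^TC)^{-1/2}$ when $C$ has full rank). Substituting back $C = AZN$ yields $X^* = \uf(AZN)$, which is the assertion of the lemma.

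There is essentially no obstacle here; the only points worth a sentence of care are the bookkeeping of the trace identity and the observation that $N$, being a fixed matrix, is absorbed into the coefficient $C = AZN$ rather than modifying the feasible set. One may also note that $C=AZN$ need not be of full rank in general, which is why the conclusion is phrased through the polar decomposition rather than through $C(C^TC)^{-1/2}$; existence of the polar factor in the rectangular case is guaranteed by the same Theorem~7.3.2 of \citet{HJ85} already used in the proof of Proposition~\ref{thm:iteration_on_stiefel}.
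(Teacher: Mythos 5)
Your proposal is correct and is exactly the paper's argument: the paper's proof of this lemma consists solely of the reference ``See Proposition~\ref{thm:iteration_on_stiefel}'', and you have simply spelled out the substitution $C=AZN$ that makes that reference applicable. The extra remarks on the rank of $C$ and the rectangular polar factorization are consistent with what the paper already establishes in Proposition~\ref{thm:iteration_on_stiefel}.
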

\begin{proof}
See Proposition \ref{thm:iteration_on_stiefel}.
\end{proof}

\begin{lmm}
The solution
\begin{equation}
\label{eq:opt_prob_Z}
Z^* \eqdef \arg \max_{\substack{Z \in \sphereprod{n}{m} \\ Z_P =0}} \trace(X^T A Z N),
\end{equation}
is at any point $X \in \St{m}{p}$ defined by the two conditions $Z^*_{\bar{P}}=(A^T X N D)_{\bar{P}}$ and $Z^*_P=0$, where $D$ is a positive diagonal matrix that normalizes each column of $Z^*$ to unit norm, i.e., \[D~=~\mathrm{Diag}(N X^T A A^T X N)^{-\frac{1}{2}}.\]
\end{lmm}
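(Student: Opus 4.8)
The plan is to treat the maximization in (\ref{eq:opt_prob_Z}) as a constrained optimization decoupled across the columns of $Z$, exactly as was done for the original block formulations. First I would write $\trace(X^T A Z N) = \sum_{j=1}^m \mu_j x_j^T A z_j = \sum_{j=1}^m \mu_j (A^T x_j)^T z_j$, which makes it clear that for fixed $X$ the objective separates into $m$ independent problems, one per column $z_j$ of $Z$, each of the form $\max \{ (A^T x_j)^T z_j \suchthat \|z_j\|_2 = 1,\ (z_j)_i = 0 \text{ for } i \text{ with } p_{ij}=1 \}$. The constraint $Z \in \sphereprod{n}{m}$ is precisely the requirement $\|z_j\|_2=1$ for each $j$, and the constraint $Z_P = 0$ fixes the inactive coordinates to zero, so there is genuinely no coupling between columns.

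Next I would solve each single-column subproblem. Restricting attention to the active coordinates $\bar P_j = \{ i \suchthat p_{ij} = 0\}$, the problem becomes maximizing $\ve{(A^T x_j)_{\bar P_j}}{(z_j)_{\bar P_j}}$ over unit-norm vectors supported on $\bar P_j$. By Cauchy--Schwarz (equivalently, Proposition \ref{thm:iteration_on_stiefel} with $m=1$, or the explicit formula (\ref{eq:main_step:ball_setting})), the maximizer is the normalized vector $(z_j^*)_{\bar P_j} = (A^T x_j)_{\bar P_j} / \| (A^T x_j)_{\bar P_j}\|_2$, with the inactive coordinates left at zero. Writing this for all $j$ simultaneously and absorbing the $\mu_j$ (which are positive and hence do not change the argmax of an individual column) gives $Z^*_{\bar P} = (A^T X N D)_{\bar P}$ and $Z^*_P = 0$, where $D$ is the diagonal matrix whose $j$-th entry is the reciprocal of the norm of the $j$-th active column $\mu_j (A^T x_j)_{\bar P_j}$.

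Finally I would verify the stated closed form $D = \Diag(N X^T A A^T X N)^{-1/2}$ by a direct computation: the $j$-th diagonal entry of $N X^T A A^T X N$ is $\mu_j^2 x_j^T A A^T x_j = \mu_j^2 \|A^T x_j\|_2^2$, but since the coordinates of $z_j^*$ outside $\bar P_j$ are forced to zero, normalization only involves the active block, so the relevant quantity is $\mu_j^2 \|(A^T x_j)_{\bar P_j}\|_2^2$; taking the $-1/2$ power of the diagonal yields exactly the normalizing factors. The one genuine subtlety — and the main thing to handle carefully — is the indexing bookkeeping: one must be careful that the formula $Z^*_{\bar P} = (A^T X N D)_{\bar P}$ is read as "compute $A^T X N D$ ignoring the zero pattern, then restrict to the active entries," and that $D$ is defined via the active-column norms rather than the full-column norms. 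If one wanted $D$ to literally equal $\Diag(N X^T A A^T X N)^{-1/2}$ as written, this is exactly correct only because the entries of $Z$ are zeroed \emph{before} normalization, so the two interpretations coincide; I would note this point explicitly rather than leave it implicit, and I expect it to be the only place where a careless reading could go wrong. Everything else is a routine Cauchy--Schwarz argument applied column by column.
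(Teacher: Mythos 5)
Your main line of argument is sound and is genuinely different from the paper's. The paper proves the lemma by writing down the Lagrangian $\mathcal{L}(Z,\Lambda_1,\Lambda_2)=\trace(X^TAZN)-\trace(\Lambda_1(Z^TZ-I_m))-\trace(\Lambda_2^TZ)$, with a diagonal multiplier $\Lambda_1$ for the column-norm constraints and a multiplier $\Lambda_2$ supported on $P$ for the zero constraints, reads off the stationarity condition $A^TXN-2Z\Lambda_1-\Lambda_2=0$, and uses a second-order argument to force $D=(2\Lambda_1)^{-1}$ to be positive. Your column-by-column decoupling plus Cauchy--Schwarz reaches the maximizer directly, identifies it as the global (not merely stationary) solution of each one-column subproblem, and avoids the multiplier bookkeeping entirely; this is arguably a cleaner and even stronger argument, since the paper's first-order analysis by itself only characterizes stationary points.

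The weak point is your final reconciliation of $D$ with the stated closed form. You correctly observe that the normalization of $z_j^*$ must use the active-part norm $\mu_j^2\|(A^Tx_j)_{\bar P_j}\|_2^2$, while the $j$-th diagonal entry of $NX^TAA^TXN$ is the full-column norm $\mu_j^2\|A^Tx_j\|_2^2$; but you then assert that ``the two interpretations coincide'' because the entries of $Z$ are zeroed before normalization. They do not coincide: zeroing before normalizing is exactly what makes the normalizing constant equal to the active-part norm, which is strictly smaller than the full-column norm whenever some inactive entry of $A^Tx_j$ is nonzero. The formula $D=\Diag(NX^TAA^TXN)^{-1/2}$, read literally, is consistent with the verbal requirement that $D$ normalize the columns of $Z^*$ to unit norm only when $P$ is empty; in general the $j$-th diagonal entry of $D$ should be $\bigl(\sum_{i:\,p_{ij}=0}\mu_j^2(a_i^Tx_j)^2\bigr)^{-1/2}$. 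This imprecision is already present in the paper's own statement and proof (and in the order of the normalization and zeroing steps of Algorithm \ref{algo:alternate_optim}), so you were right to flag the tension --- but the correct resolution is to state the corrected $D$, not to argue the discrepancy away.
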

\begin{proof} The Lagrangian of the optimization problem (\ref{eq:opt_prob_Z}) is
\[\mathcal{L}(Z,\Lambda_1,\Lambda_2)= \trace(X^T A Z N) - \trace(\Lambda_1 (Z^T Z-I_m)) - \trace(\Lambda_2^T Z),
\]
where the Lagrangian multipliers $\Lambda_1\in \R^{m \times m}$ and $\Lambda_2 \in \R^{n \times m}$ have the following properties: $\Lambda_1$ is an invertible diagonal matrix and $(\Lambda_{2})_{\bar{P}}=0$.
The first order optimality conditions of (\ref{eq:opt_prob_Z}) are thus
\begin{align*}
A^T X N - 2 Z \Lambda_1 - \Lambda_2&=0\\
\mathrm{Diag}(Z^T Z)&=I_m\\
Z_P& =0.
\end{align*}
Hence, any stationary point $Z^*$ of (\ref{eq:opt_prob_Z}) satisfies $Z_{\bar{P}}^*=(A^T X N D)_{\bar{P}}$ and $Z^*_P=0$, where $D$ is a diagonal matrix that normalizes the columns of $Z^*$ to unit norm. The second order optimality condition imposes the diagonal matrix $D$ to be positive. Such a $D$ is unique and given by $D=\mathrm{Diag}(N X^T A A^T X N)^{-\frac{1}{2}}.$
\end{proof}\\

The alternating optimization scheme is summarized in Algorithm \ref{algo:alternate_optim}, which computes a local solution of (\ref{eq:opt_imposed_sparse_pattern}).
\begin{algorithm}[h]
\dontprintsemicolon \SetKwInOut{Input}{input}
\SetKwInOut{Output}{output}
 \Input{Data matrix $A \in \R^{p \times n}$\\
 Sparsity pattern $P \in \R^{n \times m}$\\
Matrix $N=\Diag(\mu_1,\ldots, \mu_m)$\\
Initial iterate $X \in \St{m}{p}$}
 \Output{A local minimizer $(X, Z)$ of (\ref{eq:opt_imposed_sparse_pattern})}
 \Begin{\Repeat{a stopping criterion is satisfied}{
$Z \longleftarrow A^T X N$ \\
$Z \longleftarrow Z \; \Diag(Z^T Z)^{-\frac{1}{2}}$\\
$Z_P \longleftarrow 0$\\
$X \longleftarrow \uf(A Z N)$}}
 \caption{Alternating optimization scheme for solving (\ref{eq:opt_imposed_sparse_pattern}) \label{algo:alternate_optim}}
\end{algorithm}
It should be noted that Algorithm \ref{algo:alternate_optim} is a postprocessing heuristic that, strictly speaking, is required only for the $\ell_1$ block formulation (Algorithm \ref{algo4}). In fact, since the cardinality penalty only depends on the sparsity pattern $P$ and not on the actual values assigned to $Z_{\bar{P}}$, a solution $(X^*,Z^*)$ of Algorithms \ref{algo3} or \ref{algo5} is also a local maximizer of (\ref{eq:opt_imposed_sparse_pattern}) for the resulting pattern $P$. This explicit solution provides a good alternative to Algorithm \ref{algo:alternate_optim}. In the single unit case with $\ell_1$ penalty (Algorithm \ref{algo2}), the solution (\ref{eq:solution_postprocessing_single_unit}) is available.

\subsection{Sparse PCA algorithms}

To sum up, in this paper we propose four sparse PCA algorithms, each combining a method to identify a ``good'' sparsity pattern with a method to fill the active entries of the $m$ loading vectors. They are summarized in Table \ref{tbl:new_methods}.\footnote{Our algorithms are named $\mathsf{GPower}$ where the ``G'' stands for  \emph{generalized} or \emph{gradient}.}
\begin{table}[h!] \tabsize \centerline{
\begin{tabular}{l l l}
\hline
 & Computation of $P$ & Computation of $Z_{\bar{P}}$\\
\hline
$\Asingle$ & Algorithm \ref{algo2} & Equation (\ref{eq:solution_postprocessing_single_unit})\\
$\Asinglec$& Algorithm \ref{algo3} & Equation (\ref{eq:spca_single_l0_2}) \\
$\Ablock$ & Algorithm \ref{algo4} &  Algorithm \ref{algo:alternate_optim}\\
$\Ablockc$ & Algorithm \ref{algo5} & Equation (\ref{eq:block_spca_card_solution})\\
 \hline
\end{tabular}}
 \caption{\capsize New algorithms for sparse PCA.} \label{tbl:new_methods}
\end{table}

\subsection{Deflation scheme.}

For the sake of completeness, we recall a classical deflation process for
computing $m$ sparse principal components with a single-unit algorithm (\combib{d'Aspremont et al. }\citet{Aspremont07}). Let $z \in \R^n$ be a unit-norm sparse loading vector of the data $A$. Subsequent directions can be sequentially obtained by computing a dominant sparse component of the residual matrix
$A-x z^T$, where $x = A z$ is the vector that solves
\[\underset{x \in \R^p}{\min} \| A-x z^T\|_F.\]

\bigskip
\section{Numerical experiments}
\label{sec:num_sim}
In this section, we evaluate
the proposed power algorithms against existing sparse PCA methods. Three competing methods are considered in this study: a greedy scheme aimed at computing a local maximizer of (\ref{eq:RQ_l0}) (\combib{d'Aspremont et al. }\citet{Aspremont07b}), the $\SPCA$ algorithm (\combib{Zou et al. }\citet{Zou04}) and the sPCA-rSVD algorithm (\combib{Shen and Huang }\citet{shen08}). We do not include the $\DSPCA$ algorithm (\combib{d'Aspremont et al. }\citet{Aspremont07}) in our numerical study. This method solves a convex relaxation of the sparse PCA problem and has a large computational complexity of $\mathcal{O}(n^3)$ compared to the other methods. Table \ref{tbl:methods} lists the considered algorithms.
\begin{table}[H] \tabsize \centerline{
\begin{tabular}{l l}
\hline
$\Asingle$ & Single-unit sparse PCA via $\ell_1$-penalty\\
$\Asinglec$& Single-unit sparse PCA via $\ell_0$-penalty\\
$\Ablock$ & Block sparse PCA via $\ell_1$-penalty\\
$\Ablockc$ & Block sparse PCA via $\ell_0$-penalty\\
$\Greedy$ & Greedy method\\
$\SPCA$  & SPCA algorithm\\
$\rSVD$  & sPCA-rSVD algorithm with an $\ell_1$-penalty (``soft thresholding'')\\
$\rSVDc$ & sPCA-rSVD algorithm with an $\ell_0$-penalty (``hard thresholding'')\\
 \hline
\end{tabular}}
 \caption{\capsize Sparse PCA algorithms we compare in this section.} \label{tbl:methods}
\end{table}

These algorithms are compared on random data (Section \ref{sec:num_exp:examples}) as well as on real data (Section \ref{sec:num_exp:genes}). All numerical experiments are performed in \textsf{MATLAB}. Our implementations of the $\mathsf{GPower}$ algorithms are initialized at a point for which the associated sparsity pattern has \emph{at least one} active element. In case of the single-unit algorithms, such an initial iterate $x \in \sphere{p}$ is chosen parallel to the column of $A$ with the largest norm, i.e.,
\begin{equation} \label{eq:initial_point} x = \frac{a_{i^*}}{\|a_{i^*}\|_2}, \quad \text{ where } \quad i^* = \arg \max_i \|a_i\|_2.\end{equation}
For the block $\mathsf{GPower}$ algorithms, a suitable initial iterate $X \in \St{m}{p}$ is constructed in a block-wise manner as $X =[ x | X_{\perp}]$, where $x$ is the unit-norm vector (\ref{eq:initial_point}) and $X_{\perp} \in \St{m-1}{p}$ is orthogonal to $x$, i.e., $x^T X_{\perp} = 0$. We stop the $\mathsf{GPower}$ algorithms once the relative change of the objective function is small: \[\frac{f(x_{k+1})-f(x_k)}{f(x_k)} \leq \epsilon = 10^{-4}.\]
\textsf{MATLAB} implementations of the $\SPCA$ algorithm and the greedy algorithm have been rendered available by \combib{Zou et al. }\citet{Zou04} and \combib{d'Aspremont et al.}\citet{Aspremont07b}. We have, however, implemented the sPCA-rSVD algorithm on our own (Algorithm 1 in \combib{Shen and Huang }\citet{shen08}), and use it with the same stopping criterion as for the $\mathsf{GPower}$ algorithms. This algorithm initializes with the best rank-one approximation of the data matrix. This is done with the \verb"svds" function in \textsf{MATLAB}.

Given a data matrix $A \in \R^{p \times n}$, the considered sparse PCA algorithms provide $m$ unit-norm sparse loading vectors stored in the matrix $Z\in \sphereprod{n}{m}$. The samples of the associated components are provided by the $m$ columns of the product $AZ$. The variance explained by these $m$ components is an important comparison criterion of the algorithms. In the simple case $m=1$, the variance explained by the component $A z$ is \[\Var(z)=z^T A^T A z.\]
When $z$ corresponds to the first principal loading vector, the variance is $\Var(z) = \sigma_{\text{max}}(A)^2$. In the case $m>1$, the derived components are likely to be correlated. Hence, summing up the variance explained individually by each
of the components overestimates the variance explained
simultaneously by all the components. This motivates the notion of \emph{adjusted
variance}
proposed by \combib{Zou et al. }\citet{Zou04}. The adjusted variance of the $m$ components $Y = A Z$ is defined as
\[ \AVar\; Z = \trace R^2,\]
where $Y= Q R$ is the QR decomposition of the components sample matrix $Y$ ($Q \in \St{m}{p}$ and $R$ is an $m\times m$ upper triangular matrix).

\subsection{Random test problems}
\label{sec:num_exp:examples}

All random data matrices $A \in \R^{p \times n}$ considered in this section are generated according to a Gaussian distribution, with zero mean and unit variance.

\bigskip
\textbf{Trade-off curves.} Let us first compare the single-unit algorithms, which provide a unit-norm sparse loading vector $z \in \R^n$. We first plot the variance explained by the extracted component against the cardinality of the resulting loading vector $z$. For each algorithm, the sparsity-inducing parameter is incrementally increased to obtain loading vectors $z$ with a cardinality that decreases from $n$ to $1$. The results displayed in Figure \ref{fig:Fig1} are averages of computations on 100 random matrices with dimensions $p=100$ and $n=300$. The considered sparse PCA methods aggregate in two groups: $\Asingle$, $\Asinglec$, $\Greedy$ and $\rSVDc$ outperform the $\SPCA$ and the $\rSVD$ approaches. It seems that these latter methods perform worse because of the $\ell_1$ penalty term used in them. If one, however, post-processes the active part of $z$ according to (\ref{eq:solution_postprocessing_single_unit}), as we do in $\Asingle$, all sparse PCA methods reach the same performance.

\begin{figure}[h!]
\centerline{\includegraphics[height=\figwidth,keepaspectratio]{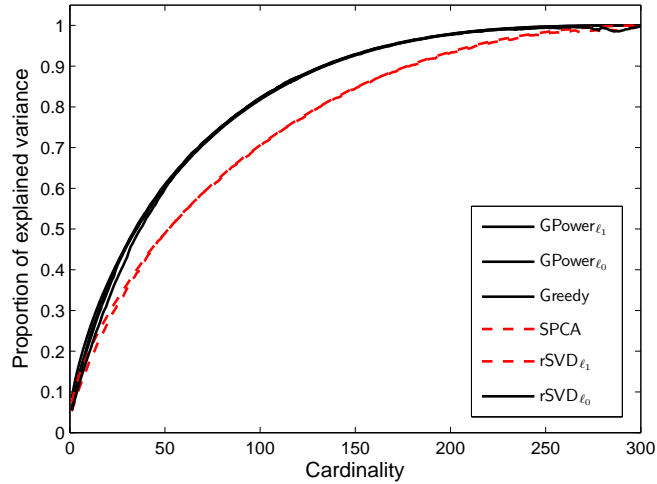}}
\caption{\capsize \textbf{Trade-off curves} between explained variance and cardinality. The vertical axis is the ratio $\Var(z_{\mathrm{sPCA}})/\Var(z_{\mathrm{PCA}})$, where the loading vector $z_{\mathrm{sPCA}}$ is computed by sparse PCA and $z_{\mathrm{PCA}}$ is the first principal loading vector. The considered algorithms aggregate in two groups: $\Asingle$, $\Asinglec$, $\Greedy$ and $\rSVDc$ (top curve), and $\SPCA$ and $\rSVD$ (bottom curve). For a fixed cardinality value, the methods of the first group explain more variance. Postprocessing algorithms $\SPCA$ and $\rSVD$ with equation (\ref{eq:solution_postprocessing_single_unit}), results, however, in the same performance as the other algorithms.}
\label{fig:Fig1}
\end{figure}

\bigskip
\textbf{Controlling sparsity with $\sparsity$.} Among the considered methods, the greedy approach is the only one to directly control the cardinality of the solution, i.e., the desired cardinality is an input of the algorithm. The other methods require a parameter controlling the trade-off between variance and cardinality. Increasing this parameter leads to solutions with smaller cardinality, but the resulting number of nonzero elements can not be precisely predicted. In Figure \ref{fig:Fig2}, we plot the average relationship between the parameter $\sparsity$ and the resulting cardinality of the loading vector $z$ for the two algorithms $\Asingle$ and $\Asinglec$. In view of (\ref{eq:singleL1_sparsity}) (resp. (\ref{eq:singleL0_sparsity})), the entries $i$ of the loading vector $z$ obtained by the $\Asingle$ algorithm (resp. the $\Asinglec$ algorithm) satisfying
\begin{equation}\label{eq:gamma_cutoff}\|a_i\|_2 \leq \sparsity \quad\text{ (resp. } \|a_i\|_2^2 \leq \sparsity) \end{equation} have to be zero. Taking into account the distribution of the norms of the columns of $A$, this provides for every $\sparsity$ a theoretical upper bound on the expected cardinality of the resulting vector $z$.

\begin{figure}[h!]
\centerline{\includegraphics[height=\figwidth,keepaspectratio]{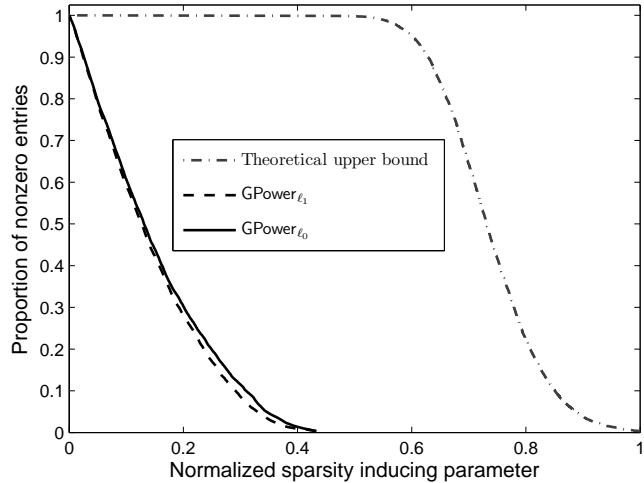}}
\caption{\capsize Dependence of cardinality on the value of the sparsity-inducing parameter $\sparsity$. In case of the $\Asingle$ algorithm, the horizontal axis shows $\sparsity/\|a_{i^*}\|_2$, whereas for the $\Asinglec$ algorithm, we use $\sqrt{\sparsity}/\|a_{i^*}\|_2$. The theoretical upper bound is therefor identical for both methods. The plots are averages based on 100 test problems of size $p=100$ and $n=300$.} \label{fig:Fig2}
\end{figure}

\bigskip
\textbf{Greedy versus the rest.} The considered sparse PCA methods feature different empirical computational complexities. In Figure \ref{fig:Fig3}, we display the average time required by the sparse PCA algorithms to extract one sparse component from Gaussian matrices of dimensions $p=100$ and $n=300$. One immediately notices that the greedy method
slows down significantly as cardinality increases, whereas the speed of the other considered algorithms does not depend on cardinality. Since on average $\Greedy$ is much slower than the other methods, even for low cardinalities, we discard it from all following numerical experiments.

\begin{figure}[h!]
\centerline{\includegraphics[height=\figwidth,keepaspectratio]{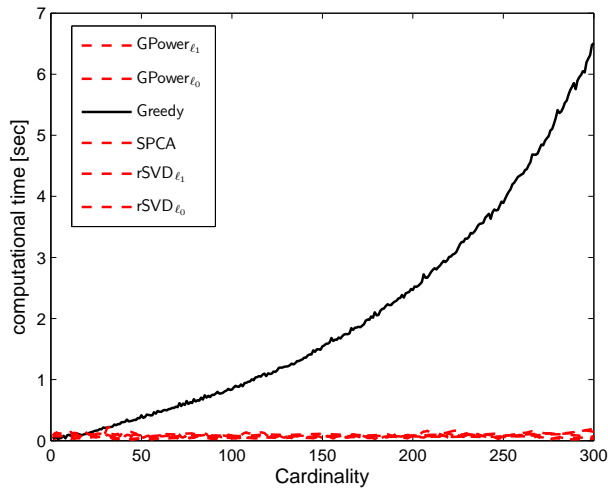}}
\caption{\capsize
The computational complexity of $\Greedy$ grows significantly if it is set out to output a loading vector of increasing cardinality. The speed of the other methods is unaffected by the cardinality target.} \label{fig:Fig3}
\end{figure}

\bigskip
\textbf{Speed and scaling test.} In Tables \ref{tbl:single_unit_time1} and \ref{tbl:single_unit_time2} we compare the speed of the remaining algorithms. Table \ref{tbl:single_unit_time1} deals with problems with a fixed aspect ratio $n/p = 10$, whereas in Table \ref{tbl:single_unit_time2},  $p$ is fixed at 500, and exponentially increasing values of $n$ are considered. For the $\Asingle$ method, the sparsity inducing parameter $\sparsity$ was set to $10\%$ of the upper bound $\sparsity_{\max} = \|a_{i^*}\|_2$. For the $\Asinglec$ method, $\sparsity$ was set to $1\%$ of $\sparsity_{\max} = \|a_{i^*}\|_2^2$ in order to aim for solutions of comparable cardinalities (see (\ref{eq:gamma_cutoff})). These two parameters have also been used for the $\rSVD$ and the $\rSVDc$ methods, respectively. Concerning $\SPCA$, the sparsity parameter has been chosen by trial and error to get, on average, solutions with similar cardinalities as obtained by the other methods. The values displayed in Tables \ref{tbl:single_unit_time1} and \ref{tbl:single_unit_time2} correspond to the average running times of the algorithms on 100 test instances for each problem size. In both tables, the new methods $\Asingle$ and $\Asinglec$ are the fastest. The difference in speed between $\Asingle$ and $\Asinglec$ results from different approaches to fill the active part of $z$: $\Asingle$ requires to compute a rank-one approximation of a submatrix of $A$ (see Equation (\ref{eq:solution_postprocessing_single_unit})), whereas the explicit solution (\ref{eq:spca_single_l0_2}) is available to $\Asinglec$. The linear complexity of the algorithms in the problem size $n$ is clearly visible in Table \ref{tbl:single_unit_time2}.

\begin{table}[h!] \tabsize\centerline{
 \begin{tabular}{l c c c c c }
\hline
$p \times n$&  $100 \times 1000$& $250 \times 2500$ &  $500 \times 5000$ &$750 \times 7500$ & $1000 \times 10000$\\
\hline
$\Asingle$  &    0.10   & 0.86   & 2.45  &4.28& 5.86\\
$\Asinglec$ &      0.03 &   0.42   & 1.21 & 2.07 & 2.85\\
$\SPCA$ &    0.24  &  2.92  & 14.5 & 40.7 & 82.2\\
$\rSVD$   & 0.21  &  1.45 &   6.70& 17.9 & 39.7\\
$\rSVDc$  &  0.20  &  1.33   & 6.06& 15.7 & 35.2\\
 \hline
\end{tabular}}
 \caption{\capsize Average computational time for the extraction of one component (in seconds).} \label{tbl:single_unit_time1}
\end{table}

\begin{table}[h!] \tabsize \centerline{
 \begin{tabular}{l c c c c c c}
\hline
$p \times n$&   $500 \times 1000$ &  $500 \times 2000$ &$500 \times 4000$ & $500 \times 8000$ & $500 \times 16000$\\
\hline
$\Asingle$   &  0.42  &  0.92  &  2.00  &  4.00  &  8.54 \\
$\Asinglec$   &  0.18   & 0.42 &   0.96   & 2.14 &   4.55 \\
$\SPCA$ &      5.20  &  7.20 &  12.0  & 22.6 &  44.7 \\
$\rSVD$   &   1.20    &2.53  &  5.33  & 11.3 &  26.7 \\
$\rSVDc$  &  1.09  &  2.26  &  4.85 &  10.5  & 24.6
\\
 \hline
\end{tabular}}
 \caption{\capsize Average computational time for the extraction of one component (in seconds).} \label{tbl:single_unit_time2}
\end{table}

\bigskip
\textbf{Different convergence mechanisms.} Figure \ref{fig:Fig4} illustrates how the trade-off between explained variance and sparsity evolves in the time of computation for the two methods $\Asingle$ and $\rSVD$. In case of the $\Asingle$ algorithm, the initialization point (\ref{eq:initial_point}) provides a good approximation of the final cardinality. This method then  works on maximizing the variance while keeping the sparsity at a low level throughout. The $\rSVD$ algorithm, in contrast, works in two steps. First, it maximizes the variance, without enforcing sparsity. This corresponds to computing the first principal component and requires thus a first run of the algorithm with random initialization and a sparsity inducing parameter set at zero. In the second run, this parameter is set to a positive value and the method works to rapidly decrease cardinality at the expense of only a modest decrease in explained variance. So, the new algorithm $\Asingle$ performs faster primarily because it combines the two phases into one, simultaneously optimizing the trade-off between variance and sparsity.
\begin{figure}[h!]
\centerline{\includegraphics[height=\figwidth,keepaspectratio]{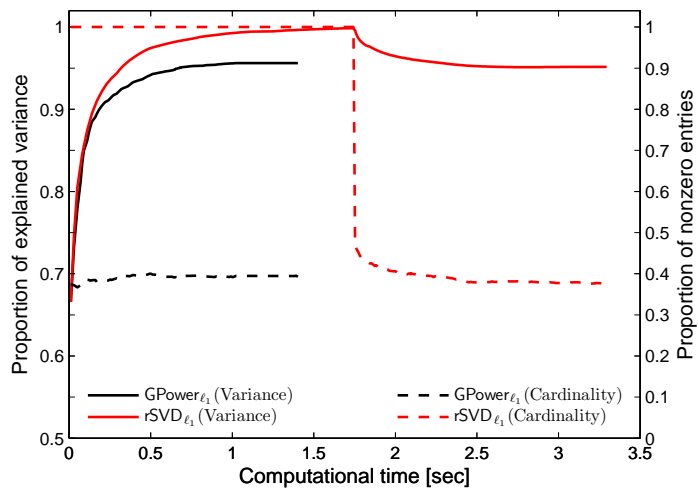}}
\caption{\capsize Evolution of the variance (solid lines and left axis) and cardinality (dashed lines and right axis) in time of computation for the methods $\Asingle$ and $\rSVD$ on a test problem with $p=250$ and $n=2500$. The vertical axis is the ratio $\Var(z_{\mathrm{sPCA}})/\Var(z_{\mathrm{PCA}})$, where the loading vector $z_{\mathrm{sPCA}}$ is computed by sparse PCA and $z_{\mathrm{PCA}}$ is the first principal loading vector. The $\rSVD$ algorithm first solves unconstrained PCA, whereas $\Asingle$ immediately optimizes the trade-off between variance and sparsity.} \label{fig:Fig4}
\end{figure}

\bigskip
\textbf{Extracting more components.} Similar numerical experiments, which include the methods
$\Ablock$ and $\Ablockc$, have been conducted for the extraction of more than one component. A deflation scheme is used by the non-block methods to sequentially compute $m$ components. These experiments lead to similar conclusions as in the single-unit case, i.e, the methods  $\Asingle$, $\Asinglec$, $\Ablock$, $\Ablockc$ and $\rSVDc$ outperform the $\SPCA$ and $\rSVD$ approaches in terms of variance explained at a fixed cardinality. Again, these last two methods can be improved by postprocessing the resulting loading vectors with Algorithm \ref{algo:alternate_optim}, as it is done for $\Ablock$. The average running times  for problems of various sizes are listed in Table \ref{tbl:block_time}. The new power-like methods are significantly faster on all instances.
\begin{table}[H] \tabsize \centerline{ 
 \begin{tabular}{l c c c c c}
\hline
$p \times n$& $50 \times 500$ &  $100 \times 1000$& $250 \times 2500$ &
$500 \times 5000$ &$750 \times 7500$\\
\hline
$\Asingle$   &     0.22  &  0.56  &  4.62 &  12.6& 20.4\\
$\Asinglec$   &   0.06  &  0.17  &  2.15 &   6.16 & 10.3\\
$\Ablock$   &   0.09  &  0.28  &  3.50  & 12.4&23.0\\
$\Ablockc$   &  0.05  &  0.14  &  2.39  &  7.7& 12.4\\
$\SPCA$   &  0.61   & 1.47 &  13.4 &  48.3&113.3\\
$\rSVD$   &  0.30 &   1.15  &  7.92  & 37.4& 97.4\\
$\rSVDc$   &  0.28 &   1.10  &  7.54 &  34.7& 85.7\\
 \hline
\end{tabular}}
 \caption{\capsize Average computational time for the extraction of $m=5$
components (in seconds).} \label{tbl:block_time}
\end{table}

\subsection{Analysis of gene expression data}
\label{sec:num_exp:genes}
Gene expression data results from DNA microarrays and provide the
expression level of thousands of genes across several hundreds of
experiments. The interpretation of these huge databases remains a
challenge. Of particular interest is the identification
of genes that are systematically coexpressed under similar
experimental conditions. We refer to \combib{Riva et al. }\citet{Riva05} and
references therein for more details on microarrays and gene expression data. PCA has been intensively applied in this
context (e.g., \combib{Alter at al. }\citet{Alter03}). Further methods for dimension reduction, such as independent component analysis (\combib{Liebermeister }\citet{Liebermeister02}) or nonnegative matrix factorization (\combib{Brunet et al. }\citet{Brunet04}), have also been used on gene expression data. Sparse PCA, which extracts components involving a few
genes only, is expected to enhance interpretation.

\bigskip
\textbf{Data sets.} The results below focus on four major data sets related to breast
cancer. They are briefly detailed in Table \ref{tbl:cancer_cohorts}. Each sparse PCA algorithm computes ten components from these data sets.
\begin{table}[h] \tabsize \centerline{
 \begin{tabular}{l c c c c c }
\hline
Study & Samples ($p$) & Genes ($n$) & Reference \\
\hline Vijver & 295 & 13319 & \combib{van de Vijver et al. }\citet{vandeVijver2002}\\
Wang & 285 & 14913&  \combib{Wang et al. }\citet{Wang2005}\\
Naderi & 135 & 8278& \combib{Naderi et al. }\citet{Naderi2006}\\
JRH-2 & 101 & 14223& \combib{Sotiriou et al. }\citet{Sotiriou2006}\\
 \hline
\end{tabular}}
 \caption{\capsize Breast cancer cohorts.} \label{tbl:cancer_cohorts}
\end{table}

\bigskip
\textbf{Speed.} The average computational time required by the sparse PCA algorithms on each data set is displayed in Table \ref{tbl:cancer_cohorts_time}. The indicated times are averages on all the computations performed to obtain cardinality ranging from $n$ down to 1.

\begin{table}[h!] \tabsize \centerline{
 \begin{tabular}{l c c c c }
\hline
  & Vijver & Wang & Naderi & JRH-2 \\
\hline
$\Asingle$   &  7.72  &  6.96  &  2.15   & 2.69\\
$\Asinglec$   &  3.80&    4.07 &   1.33  &  1.73\\
$\Ablock$  & 5.40  &  4.37  &  1.77 &  1.14\\
$\Ablockc$  &  5.61 &   7.21  &  2.25   & 1.47\\
$\SPCA$  & 77.7  & 82.1  & 26.7 &  11.2\\
$\rSVD$  &  46.4 &  49.3  & 13.8 &  15.7\\
$\rSVDc$  & 46.8 &  48.4 &  13.7  & 16.5\\
 \hline
\end{tabular}}
 \caption{\capsize Average computational times (in seconds).}
\label{tbl:cancer_cohorts_time}
\end{table}

\bigskip
\textbf{Trade-off curves.} Figure \ref{fig:Fig5} plots the proportion of adjusted variance
versus the cardinality for the ``Vijver'' data set. The other data
sets have similar plots. As for the random test problems, this performance criterion does not discriminate among the different algorithms. All methods have in fact the same performance, provided that the $\SPCA$ and $\rSVD$ approaches are used with postprocessing by Algorithm \ref{algo:alternate_optim}.
\begin{figure}[h]
\centerline{\includegraphics[height=\figwidth,keepaspectratio]{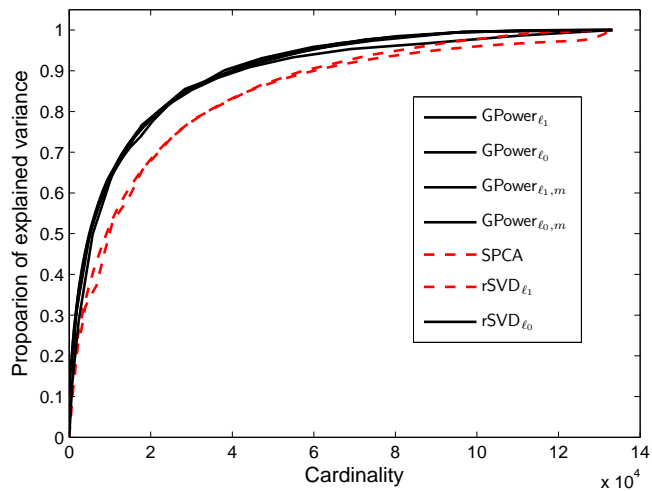}}
\caption{\capsize
\textbf{Trade-off curves} between explained variance and cardinality (case
of the ``Vijver'' data). The vertical axis is the ratio
$\AVar(Z_{\mathrm{sPCA}})/\AVar(Z_{\mathrm{PCA}})$, where the loading
vectors $Z_{\mathrm{sPCA}}$ are computed by sparse PCA and
$Z_{\mathrm{PCA}}$ are the $m$ first principal loading
vectors.\label{fig:Fig5}}
\end{figure}

\bigskip
\textbf{Interpretability.} A more interesting performance criterion is to estimate the biological interpretability of the extracted components. The \emph{pathway
enrichment index} (PEI) proposed by \combib{Teschendorff et al. }\citet{Teschendorff06} measures
the statistical significance of the overlap between two kinds of
gene sets. The first sets are inferred from the computed components
by retaining the most expressed genes, whereas the second sets
result from biological knowledge. For instance, metabolic pathways
provide sets of genes known to participate together when a certain
biological function is required. An alternative is given by the
regulatory motifs: genes tagged with an identical motif are likely
to be coexpressed. One expects sparse PCA methods to recover some of
these biologically significant sets. Table \ref{tbl:PEI1} displays the
PEI based on 536 metabolic pathways related to cancer. The PEI is the fraction
of these 536 sets presenting a statistically significant overlap
with the genes inferred from the sparse principal components. The
values in Table \ref{tbl:PEI1} correspond to the largest PEI obtained
among all possible cardinalities. Similarly, Table \ref{tbl:PEI2} is
based on 173 motifs. More details on the selected pathways and
motifs can be found in \combib{Teschendorff et al. }\citet{Teschendorff06}. This analysis clearly indicates that the sparse PCA methods perform much better than PCA in this context. Furthermore, the new $\mathsf{GPower}$ algorithms, and especially the block formulations, provide largest
PEI values for both types of biological information. In terms of biological interpretability, they systematically outperform previously published algorithms.

\begin{table}[h!]  \tabsize \centerline{
 \begin{tabular}{l c c c c c }
\hline
 & Vijver & Wang & Naderi & JRH-2 \\
\hline
PCA & 0.0728 & 0.0466 & 0.0149 & 0.0690\\
$\Asingle$  &\textbf{0.1493}    &0.1026    &0.0728    &0.1250\\
$\Asingle$  &0.1250    &0.1250    &0.0672    &0.1026\\
$\Ablock$     &0.1418    &0.1250    &\textbf{0.1026}    &\textbf{0.1381}\\
$\Ablockc$   &0.1362    &\textbf{0.1287 }   &0.1007    &0.1250\\
$\SPCA$    &0.1362    &0.1007    &0.0840    &0.1007\\
$\rSVD$    &0.1213    &0.1175    &0.0914    &0.0914\\
$\rSVDc$    &0.1175    &0.0970    &0.0634    &0.1063\\
 \hline
\end{tabular}}
 \caption{\capsize PEI-values based on a set of 536 cancer-related pathways.} \label{tbl:PEI1}
\end{table}

\begin{table}[h!]  \tabsize \centerline{
 \begin{tabular}{l c c c c c }
\hline
 & Vijver & Wang & Naderi & JRH-2 \\
\hline
$\PCA$ & 0.0347 & 0 & 0.0289 & 0.0405\\
$\Asingle$   &0.1850   & 0.0867   & 0.0983   & 0.1792\\
$\Asinglec$     &0.1676   & 0.0809   & 0.0925   & \textbf{0.1908}\\
$\Ablock$    &\textbf{0.1908}   & \textbf{0.1156}   & \textbf{0.1329}   & 0.1850 \\
$\Ablockc$    &0.1850   & 0.1098   & \textbf{0.1329}   & 0.1734\\
$\SPCA$    &0.1734   & 0.0925   & 0.0809   & 0.1214\\
$\rSVD$   &0.1387   & 0.0809   & 0.1214   & 0.1503\\
$\rSVDc$   &0.1445   & 0.0867   & 0.0867   & 0.1850\\
 \hline
\end{tabular}}
 \caption{\capsize PEI-values based on a set of 173 motif-regulatory gene sets.} \label{tbl:PEI2}
\end{table}

\section{Conclusion}

We have proposed two single-unit and two block formulations of the sparse PCA problem and constructed reformulations with several favorable properties. First, the reformulated problems are of the form of maximization of a convex function on a compact set, with the feasible set being either a unit Euclidean sphere or the Stiefel manifold. This structure allows for the design and iteration complexity analysis of a simple gradient scheme which applied to our sparse PCA setting results in four new algorithms for computing sparse principal components of a matrix $A\in \R^{p\times n}$. Second, our algorithms appear to be faster if either the objective function or the feasible set are strongly convex, which holds in the single-unit case and can be enforced in the block case. Third, the dimension of the feasible sets does not depend on $n$ but on $p$ and on the number $m$ of components to be extracted. This is a highly desirable property if $p\ll n$. Last but not least, on random and real-life biological data, our methods systematically outperform the existing algorithms both in speed and trade-off performance. Finally, in the case of the biological data, the components obtained by our block algorithms deliver the richest biological interpretation as compared to the components extracted by the other methods.

\subsection*{Acknowlegments}
This paper presents research results of the Belgian Network DYSCO (Dynamical
Systems, Control, and Optimization), funded by the Interuniversity
Attraction Poles Programme, initiated by the Belgian State,
Science Policy Office. The scientific responsibility rests with
its authors. Research of Yurii Nesterov and Peter Richtárik has been supported by the grant ``Action de recherche concert\'ee ARC 04/09-315'' from the ``Direction de la recherche scientifique - Communaut\'e fran\c{c}aise de Belgique''. Michel Journ\'{e}e is a research fellow of the
Belgian National Fund for Scientific Research (FNRS).

\section{Appendix A}

In this appendix we characterize a class of functions with strongly convex level sets. First we need to collect some basic preliminary facts.  All the inequalities of Proposition \ref{prop:appendix} are well-known in the literature.

\begin{prpstn}\label{prop:appendix}\begin{itemize}
\item[(i)] If $f$ is a  strongly convex function with convexity parameter $\sigma_f$, then for all $x,y$ and $0\leq \alpha \leq 1$,
\begin{equation}\label{app:eq:strong_conv}f(\alpha x + (1-\alpha)y) \leq \alpha f(x) + (1-\alpha)f(y) - \frac{\sigma_f}{2}\alpha(1-\alpha)\|x-y\|^2.\end{equation}
\item[(ii)]If $f$ is a convex differentiable function and its gradient is Lipschitz continuous with constant $L_f$, then for all $x$  and $h$,
\begin{equation}\label{app:eq:Lipsch_gradient}f(x+h)\leq f(x) + \ve{f'(x)}{h} + \frac{L_f}{2}\|h\|^2,\end{equation}
and
\begin{equation}\label{app:eq:bound_on_derivative}\|f'(x)\|_* \leq \sqrt{2L_f(f(x)-f_*)},\end{equation}
where $f_* \eqdef \min_{x\in \E} f(x)$.
\end{itemize}
\end{prpstn}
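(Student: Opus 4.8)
The plan is to prove the two parts separately, each by a short classical argument, assuming only the hypotheses stated (and, for part~(i), the subgradient form of strong convexity recorded in Assumption~\ref{ass:2}, which does not require differentiability since $f'(x)$ may be any subgradient). For part~(i), I would set $w = \alpha x + (1-\alpha)y$ and apply the strong convexity inequality $f(u) \geq f(w) + \ve{f'(w)}{u-w} + \tfrac{\sigma_f}{2}\|u-w\|^2$ twice, once with $u = x$ and once with $u = y$. Taking the convex combination with weights $\alpha$ and $1-\alpha$ makes the linear terms cancel, because $\alpha(x-w) + (1-\alpha)(y-w) = \alpha x + (1-\alpha)y - w = 0$. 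Since $x - w = (1-\alpha)(x-y)$ and $y - w = -\alpha(x-y)$, the quadratic remainders combine as $\tfrac{\sigma_f}{2}\bigl[\alpha(1-\alpha)^2 + (1-\alpha)\alpha^2\bigr]\|x-y\|^2 = \tfrac{\sigma_f}{2}\alpha(1-\alpha)\|x-y\|^2$, and rearranging the resulting inequality is exactly (\ref{app:eq:strong_conv}).

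For the first inequality of part~(ii), I would use the fundamental theorem of calculus along the segment joining $x$ and $x+h$: writing $f(x+h) - f(x) = \int_0^1 \ve{f'(x+th)}{h}\,dt$, subtracting $\ve{f'(x)}{h}$ gives $f(x+h) - f(x) - \ve{f'(x)}{h} = \int_0^1 \ve{f'(x+th) - f'(x)}{h}\,dt$. Bounding the integrand via the duality inequality $\ve{s}{u} \leq \|s\|_* \|u\|$ and the Lipschitz condition $\|f'(x+th) - f'(x)\|_* \leq L_f\, t\,\|h\|$ yields the integrand bound $L_f t \|h\|^2$, and integration over $[0,1]$ produces $\tfrac{L_f}{2}\|h\|^2$; this is (\ref{app:eq:Lipsch_gradient}).

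For the second inequality of part~(ii), I would specialize (\ref{app:eq:Lipsch_gradient}) to the displacement $h = -\tfrac{1}{L_f}\G^{-1} f'(x)$. Then $\ve{f'(x)}{h} = -\tfrac{1}{L_f}\ve{f'(x)}{\G^{-1}f'(x)} = -\tfrac{1}{L_f}\|f'(x)\|_*^2$ and $\|h\|^2 = \ve{\G h}{h} = \tfrac{1}{L_f^2}\|f'(x)\|_*^2$, so (\ref{app:eq:Lipsch_gradient}) gives $f(x+h) \leq f(x) - \tfrac{1}{2L_f}\|f'(x)\|_*^2$. Since $f(x+h) \geq f_* = \min_{u\in\E} f(u)$, we obtain $\|f'(x)\|_*^2 \leq 2L_f\bigl(f(x) - f_*\bigr)$, which is (\ref{app:eq:bound_on_derivative}) after taking square roots.

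I do not expect a genuine obstacle here; all three steps are routine. The only points requiring care are the consistent bookkeeping of the operator $\G$ when passing between $\E$ and $\E^*$ (in particular checking $\ve{s}{u} \leq \|s\|_* \|u\|$ and the identity $\|\G^{-1}f'(x)\| = \|f'(x)\|_*$), and making sure every hypothesis of part~(ii) is invoked — differentiability and Lipschitz continuity of the gradient in the descent lemma, and finiteness of $f_*$ so that the right-hand side of (\ref{app:eq:bound_on_derivative}) is meaningful.
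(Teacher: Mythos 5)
Your three arguments are all correct: the two-fold application of the subgradient inequality at the convex combination point (with the linear terms cancelling and the quadratic remainders summing to $\alpha(1-\alpha)\|x-y\|^2$), the integral form of the descent lemma, and the specialization $h=-\tfrac{1}{L_f}\G^{-1}f'(x)$ followed by $f(x+h)\geq f_*$ are exactly the standard derivations, and your bookkeeping of $\G$ between $\E$ and $\E^*$ is consistent with the norms defined in (\ref{eq:norms}). For comparison, the paper offers no proof at all of Proposition~\ref{prop:appendix}; it simply records these inequalities as well known, so your write-up fills in precisely the omitted classical material with no gaps.
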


We are now ready for the main result of this section.

\begin{thrm}[Strongly convex level sets] \label{app:thm:level_sets} Let $f:\E\to \R$ be a nonnegative strongly convex function with convexity parameter $\sigma_f>0$. Also assume $f$ has a Lipschitz continuous gradient with Lipschitz constant $L_f>0$. Then for any $\level>0$, the  set
\[\Q_\level  \eqdef \{x\suchthat f(x)\leq \level\}\]
is strongly convex with convexity parameter
\[\sigma_{\Q_\level} = \frac{\sigma_f}{\sqrt{2\level L_f}}.\]
\end{thrm}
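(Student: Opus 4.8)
The plan is to verify the defining inclusion of Assumption~\ref{ass:3} directly for the level set $\Q_\level$. Fix $x,y\in\Q_\level$ and $\alpha\in[0,1]$, and set $x_\alpha \eqdef \alpha x + (1-\alpha)y$. I must show that the ball of radius $\rho \eqdef \tfrac{\sigma_{\Q_\level}}{2}\alpha(1-\alpha)\|x-y\|^2$ centered at $x_\alpha$ lies inside $\Q_\level$, i.e. that $f(z)\leq\level$ for every $z$ with $\|z-x_\alpha\|\leq\rho$. Writing $z = x_\alpha + h$ with $\|h\|\leq\rho$, I will bound $f(z)$ from above in two stages: first use strong convexity of $f$ in the form (\ref{app:eq:strong_conv}) from Proposition~\ref{prop:appendix}(i) to control $f(x_\alpha)$, then use the Lipschitz-gradient inequality (\ref{app:eq:Lipsch_gradient}) to absorb the perturbation by $h$.

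Concretely, (\ref{app:eq:strong_conv}) gives
\[
f(x_\alpha) \leq \alpha f(x) + (1-\alpha)f(y) - \tfrac{\sigma_f}{2}\alpha(1-\alpha)\|x-y\|^2 \leq \level - \tfrac{\sigma_f}{2}\alpha(1-\alpha)\|x-y\|^2,
\]
using $f(x),f(y)\leq\level$. Next, (\ref{app:eq:Lipsch_gradient}) together with the gradient bound (\ref{app:eq:bound_on_derivative}) and $f_*\geq 0$ (nonnegativity of $f$) yields
\[
f(x_\alpha + h) \leq f(x_\alpha) + \ve{f'(x_\alpha)}{h} + \tfrac{L_f}{2}\|h\|^2 \leq f(x_\alpha) + \sqrt{2L_f f(x_\alpha)}\,\|h\| + \tfrac{L_f}{2}\|h\|^2.
\]
Since $f(x_\alpha)\leq\level$, the coefficient $\sqrt{2L_f f(x_\alpha)}$ is at most $\sqrt{2L_f\level}$, so combining the two displays gives
\[
f(z) \leq \level - \tfrac{\sigma_f}{2}\alpha(1-\alpha)\|x-y\|^2 + \sqrt{2L_f\level}\,\|h\| + \tfrac{L_f}{2}\|h\|^2.
\]
It then remains to check that the last two terms are dominated by the negative term whenever $\|h\|\leq\rho$ with $\rho = \tfrac{\sigma_f}{2\sqrt{2\level L_f}}\alpha(1-\alpha)\|x-y\|^2$. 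Plugging in, $\sqrt{2L_f\level}\,\rho = \tfrac{\sigma_f}{2}\alpha(1-\alpha)\|x-y\|^2$ exactly cancels the gain from strong convexity, so one is left needing $\tfrac{L_f}{2}\rho^2 \leq 0$ — which fails unless I am more careful.

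The main obstacle, then, is this quadratic slack term $\tfrac{L_f}{2}\|h\|^2$: the naive bound just barely fails. The fix is to not be wasteful in the first stage — one should keep the ``reserve'' $\level - f(x_\alpha)$ rather than only its lower bound, or equivalently run the perturbation argument against $f(x_\alpha)$ directly. Set $r \eqdef \level - f(x_\alpha)\geq 0$; I want $f(x_\alpha+h)\leq\level$, i.e. $\sqrt{2L_f f(x_\alpha)}\|h\| + \tfrac{L_f}{2}\|h\|^2 \leq r$. Since $f(x_\alpha) = \level - r \leq \level$, it suffices to have $\sqrt{2L_f\level}\,\|h\| + \tfrac{L_f}{2}\|h\|^2 \leq r$. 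Using $\|h\|\leq\rho$ and $r \geq \tfrac{\sigma_f}{2}\alpha(1-\alpha)\|x-y\|^2 = \sqrt{2L_f\level}\,\rho$, the first-order term consumes the entire reserve, leaving a deficit of $\tfrac{L_f}{2}\rho^2$. So the genuinely correct route is to solve the quadratic exactly: $f(x_\alpha+h)\leq\level$ holds iff $\|h\| \leq \tfrac{1}{L_f}\big(\sqrt{2L_f f(x_\alpha) + 2L_f r} - \sqrt{2L_f f(x_\alpha)}\big) = \tfrac{1}{L_f}\big(\sqrt{2L_f\level} - \sqrt{2L_f f(x_\alpha)}\big)$, and one then shows this exact radius is at least $\rho$ by the concavity/subadditivity of $\sqrt{\cdot}$: namely $\sqrt{2L_f\level} - \sqrt{2L_f f(x_\alpha)} \geq \tfrac{2L_f r}{2\sqrt{2L_f\level}} = \sqrt{\tfrac{L_f}{2\level}}\,r \geq \sqrt{\tfrac{L_f}{2\level}}\cdot\tfrac{\sigma_f}{2}\alpha(1-\alpha)\|x-y\|^2 = L_f\rho$, where the first inequality uses $\sqrt{b}-\sqrt{a}\geq \tfrac{b-a}{2\sqrt{b}}$ for $0\leq a\leq b$. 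Dividing by $L_f$ gives exactly $\rho$, completing the proof. The crux is thus recognizing that one must not linearize the square root prematurely, and that the constant $\sqrt{2\level L_f}$ in the statement is precisely what makes the concavity estimate $\sqrt{b}-\sqrt{a}\geq\tfrac{b-a}{2\sqrt b}$ tight at $b=\level$.
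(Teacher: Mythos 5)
Your proof is correct and is essentially the paper's own argument in a different algebraic arrangement: solving the quadratic for the admissible radius is the same step as the paper's completion of the square $f(z_\alpha)+\sqrt{2L_f f(z_\alpha)}\|u\|+\tfrac{L_f}{2}\|u\|^2=\bigl(\sqrt{f(z_\alpha)}+\sqrt{L_f/2}\,\|u\|\bigr)^2$, and your estimate $\sqrt{b}-\sqrt{a}\geq\tfrac{b-a}{2\sqrt{b}}$ is exactly the paper's concavity bound $\sqrt{\level-\beta}\leq\sqrt{\level}-\tfrac{\beta}{2\sqrt{\level}}$. Your diagnosis of why the naive linearized bound fails by the term $\tfrac{L_f}{2}\rho^2$ is a nice addition, and the rest matches the paper's proof.
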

\begin{proof} Consider any $x,y\in \Q_\level$, scalar $0\leq \alpha \leq 1$ and let $z_\alpha = \alpha x + (1-\alpha)y$. Notice that by convexity, $f(z_\alpha)\leq \level$. For any $u\in \E$,
\begin{align*}f(z_\alpha + u)
& \substack{(\ref{app:eq:Lipsch_gradient})\\\leq}  f(z_\alpha) + \ve{f'(z_\alpha)}{u} + \frac{L_f}{2}\|u\|^2\\
& \leq f(z_\alpha) + \|f'(z_\alpha)\|\|u\| + \frac{L_f}{2}\|u\|^2 \\
& \substack{(\ref{app:eq:bound_on_derivative})\\\leq} f(z_\alpha) + \sqrt{2L_f f(z_\alpha)}\|u\| + \frac{L_f}{2}\|u\|^2 \\
& = \left(\sqrt{f(z_\alpha)} + \sqrt{\tfrac{L_f}{2}}\|u\|\right)^2\\
& \substack{(\ref{app:eq:strong_conv})\\\leq} \left(\sqrt{\level - \beta} + \sqrt{\tfrac{L_f}{2}}\|u\|\right)^2,
\end{align*}
where \begin{equation}\label{app:eq:beta}\beta = \frac{\sigma_f}{2}\alpha(1-\alpha)\|x-y\|^2.\end{equation}
In view of (\ref{eq:set_strongly_convex}), it remains to show that the last displayed expression is bounded above by $\level$ whenever $u$ is of the form
\begin{equation}\label{app:eq:u}u=\frac{\sigma_{\Q_\level}}{2}\alpha(1-\alpha)\|x-y\|^2s = \frac{\sigma_f}{2\sqrt{2\level L_f}}\alpha(1-\alpha)\|x-y\|^2s,\end{equation} for some $s\in \sph$. However, this follows directly from concavity of the scalar function $g(t)=\sqrt{t}$:
\begin{align*}\sqrt{\level-\beta} = g(\level-\beta) &\leq g(\level) - \ve{g'(\level)}{\beta}\\
&= \sqrt{\level} - \frac{\beta}{2\sqrt{\level}}\\
& \substack{(\ref{app:eq:beta})\\\leq} \sqrt{\level} - \frac{\sigma_f}{4\sqrt{\level}}\alpha(1-\alpha)\|x-y\|^2\\
&\substack{(\ref{app:eq:u})\\\leq} \sqrt{\level} - \sqrt{\frac{L_f}{2}} \|u\|.
\end{align*}

\end{proof}

\begin{xmpl}Let $f(x)=\|x\|^2$. Note that $\sigma_f=L_f=2$. If we let $\level=r^2$, then
\[\Q_{\level} = \{x \suchthat f(x)\leq \level\} = \{x \suchthat \|x\|\leq r\} = r \cdot \ball.\]
We have shown before (see the discussion immediately following Assumption~\ref{ass:3}), that the strong convexity parameter of this set is
$\sigma_{\Q_\level} = \tfrac{1}{r}$. Note that we recover this as a special case of Theorem~\ref{app:thm:level_sets}:
\[\sigma_{\Q_\level} = \frac{\sigma_f}{\sqrt{2\level L_f}} = \frac{1}{r}.\]
\end{xmpl}

\vskip 0.2in

\begin{thebibliography}{24}
\providecommand{\natexlab}[1]{#1}
\providecommand{\url}[1]{\texttt{#1}}
\expandafter\ifx\csname urlstyle\endcsname\relax
  \providecommand{\doi}[1]{doi: #1}\else
  \providecommand{\doi}{doi: \begingroup \urlstyle{rm}\Url}\fi

\bibitem[Absil et~al.(2008)Absil, Mahony, and Sepulchre]{AbsMahSep2008}
P.-A. Absil, R.~Mahony, and R.~Sepulchre.
\newblock \emph{Optimization Algorithms on Matrix Manifolds}.
\newblock Princeton University Press, Princeton, January 2008.

\bibitem[Alter et~al.(2003)Alter, Brown, and Botstein]{Alter03}
O.~Alter, P.~O. Brown, and D.~Botstein.
\newblock Generalized singular value decomposition for comparative analysis of
  genome-scale expression data sets of two different organisms.
\newblock \emph{Proc Natl Acad Sci USA}, 100\penalty0 (6):\penalty0 3351--3356,
  2003.

\bibitem[Brockett(1991)]{Brockett91}
R.~W. Brockett.
\newblock Dynamical systems that sort lists, diagonalize matrices and solve
  linear programming problems.
\newblock \emph{Linear Algebra Appl.}, 146:\penalty0 79--91, 1991.

\bibitem[Brunet et~al.(2004)Brunet, Tamayo, Golub, and Mesirov]{Brunet04}
J.~P. Brunet, P.~Tamayo, T.~R. Golub, and J.~P. Mesirov.
\newblock Metagenes and molecular pattern discovery using matrix factorization.
\newblock \emph{Proc Natl Acad Sci USA}, 101\penalty0 (12):\penalty0
  4164--4169, 2004.

\bibitem[Cadima and Jolliffe(1995)]{Cadima95}
J.~Cadima and I.~T. Jolliffe.
\newblock Loadings and correlations in the interpretation of principal
  components.
\newblock \emph{Journal of Applied Statistics}, 22:\penalty0 203--214, 1995.

\bibitem[{d'Aspremont} et~al.(2007){d'Aspremont}, {El~Ghaoui}, Jordan, and
  Lanckriet]{Aspremont07}
A.~{d'Aspremont}, L.~{El~Ghaoui}, M.~I. Jordan, and G.~R.~G. Lanckriet.
\newblock A direct formulation for sparse {PCA} using semidefinite programming.
\newblock \emph{Siam Review}, 49:\penalty0 434--448, 2007.

\bibitem[{d'Aspremont} et~al.(2008){d'Aspremont}, Bach, and
  {El~Ghaoui}]{Aspremont07b}
A.~{d'Aspremont}, F.~R. Bach, and L.~{El~Ghaoui}.
\newblock Optimal solutions for sparse principal component analysis.
\newblock \emph{Journal of Machine Learning Research}, 9:\penalty0 1269--1294,
  2008.

\bibitem[Fraikin et~al.(2008)Fraikin, Nesterov, and Dooren]{FNVD08}
C.~Fraikin, Yu. Nesterov, and P.~Van Dooren.
\newblock A gradient-type algorithm optimizing the coupling between matrices.
\newblock \emph{Linear Algebra and its Applications}, 429\penalty0
  (5-6):\penalty0 1229--1242, 2008.

\bibitem[Golub and Van~Loan(1996)]{Golub96}
G.~H. Golub and C.~F. Van~Loan.
\newblock \emph{Matrix Computations}.
\newblock {The Johns Hopkins University Press}, 1996.

\bibitem[Horn and Johnson(1985)]{HJ85}
R.~A. Horn and C.~A. Johnson.
\newblock \emph{Matrix analysis}.
\newblock Cambridge University Press, Cambridge, UK, 1985.

\bibitem[Jolliffe(1995)]{jolliffe95}
I.~T. Jolliffe.
\newblock Rotation of principal components: choice of normalization
  constraints.
\newblock \emph{Journal of Applied Statistics}, 22:\penalty0 29--35, 1995.

\bibitem[Jolliffe et~al.(2003)Jolliffe, Trendafilov, and Uddin]{jolliffe03}
I.~T. Jolliffe, N.~T. Trendafilov, and M.~Uddin.
\newblock A modified principal component technique based on the {LASSO}.
\newblock \emph{Journal of Computational and Graphical Statistics}, 12\penalty0
  (3):\penalty0 531--547, 2003.

\bibitem[Liebermeister(2002)]{Liebermeister02}
W.~Liebermeister.
\newblock Linear modes of gene expression determined by independent component
  analysis.
\newblock \emph{Bioinformatics}, 18\penalty0 (1):\penalty0 51--60, 2002.

\bibitem[Moghaddam et~al.(2006)Moghaddam, Weiss, and Avidan]{Moghaddam06}
B.~Moghaddam, Y.~Weiss, and S.~Avidan.
\newblock Spectral bounds for sparse {PCA}: Exact and greedy algorithms.
\newblock In Y.~Weiss, B.~Sch\"{o}lkopf, and J.~Platt, editors, \emph{Advances
  in Neural Information Processing Systems 18}, pages 915--922. MIT Press,
  Cambridge, MA, 2006.

\bibitem[Naderi et~al.(2007)Naderi, Teschendorff, Barbosa-Morais, Pinder,
  Green, Powe, Robertson, Aparicio, Ellis, Brenton, and Caldas]{Naderi2006}
A.~Naderi, A.~E. Teschendorff, N.~L. Barbosa-Morais, S.~E. Pinder, A.~R. Green,
  D.~G. Powe, J.~F.~R. Robertson, S.~Aparicio, I.~O. Ellis, J.~D. Brenton, and
  C.~Caldas.
\newblock A gene expression signature to predict survival in breast cancer
  across independent data sets.
\newblock \emph{Oncogene}, 26:\penalty0 1507--1516, 2007.

\bibitem[Parlett(1980)]{Parlett80}
Beresford~N. Parlett.
\newblock \emph{The symmetric eigenvalue problem}.
\newblock Prentice-Hall Inc., Englewood Cliffs, N.J., 1980.
\newblock ISBN 0-13-880047-2.
\newblock Prentice-Hall Series in Computational Mathematics.

\bibitem[Riva et~al.(2005)Riva, Carpentier, Torr{\'e}sani, and
  H{\'e}naut]{Riva05}
A.~Riva, A.-S. Carpentier, B.~Torr{\'e}sani, and A.~H{\'e}naut.
\newblock Comments on selected fundamental aspects of microarray analysis.
\newblock \emph{Computational Biology and Chemistry}, 29\penalty0 (5):\penalty0
  319--336, 2005.

\bibitem[Shen and Huang(2008)]{shen08}
Haipeng Shen and Jianhua~Z. Huang.
\newblock Sparse principal component analysis via regularized low rank matrix
  approximation.
\newblock \emph{Journal of Multivariate Analysis}, 99\penalty0 (6):\penalty0
  1015--1034, 2008.

\bibitem[Sotiriou et~al.(2006)Sotiriou, Wirapati, Loi, Harris, Fox, Smeds,
  Nordgren, Farmer, Praz, Haibe-Kains, Desmedt, Larsimont, Cardoso, Peterse,
  Nuyten, Buyse, {Van de Vijver}, Bergh, Piccart, and Delorenzi]{Sotiriou2006}
C.~Sotiriou, P.~Wirapati, S.~Loi, A.~Harris, S.~Fox, J.~Smeds, H.~Nordgren,
  P.~Farmer, V.~Praz, B.~Haibe-Kains, C.~Desmedt, D.~Larsimont, F.~Cardoso,
  H.~Peterse, D.~Nuyten, M.~Buyse, M.~J. {Van de Vijver}, J.~Bergh, M.~Piccart,
  and M.~Delorenzi.
\newblock Gene expression profiling in breast cancer: understanding the
  molecular basis of histologic grade to improve prognosis.
\newblock \emph{J Natl Cancer Inst}, 98\penalty0 (4):\penalty0 262--272, 2006.

\bibitem[Teschendorff et~al.(2007)Teschendorff, Journ\'ee, Absil, Sepulchre,
  and Caldas]{Teschendorff06}
A.~Teschendorff, M.~Journ\'ee, P.-A. Absil, R.~Sepulchre, and C.~Caldas.
\newblock Elucidating the altered transcriptional programs in breast cancer
  using independent component analysis.
\newblock \emph{PLoS Computational Biology}, 3\penalty0 (8):\penalty0
  1539--1554, 2007.

\bibitem[Tibshirani(1996)]{Tibshirani96}
R.~Tibshirani.
\newblock Regression shrinkage and selection via the lasso.
\newblock \emph{Journal of the Royal Statistical Society, Series B},
  58\penalty0 (2):\penalty0 267--288, 1996.

\bibitem[{van de Vijver} et~al.(2002){van de Vijver}, He, {van't Veer}, Dai,
  Hart, Voskuil, Schreiber, Peterse, Roberts, Marton, Parrish, Atsma,
  Witteveen, Glas, Delahaye, {van der Velde}, Bartelink, Rodenhuis, Rutgers,
  Friend, and Bernards]{vandeVijver2002}
M.~J. {van de Vijver}, Y.~D. He, L.~J. {van't Veer}, H.~Dai, A.~A. Hart, D.~W.
  Voskuil, G.~J. Schreiber, J.~L. Peterse, C.~Roberts, M.~J. Marton,
  M.~Parrish, D.~Atsma, A.~Witteveen, A.~Glas, L.~Delahaye, T.~{van der Velde},
  H.~Bartelink, S.~Rodenhuis, E.~T. Rutgers, S.~H. Friend, and R.~Bernards.
\newblock A gene-expression signature as a predictor of survival in breast
  cancer.
\newblock \emph{N Engl J Med}, 347\penalty0 (25):\penalty0 1999--2009, 2002.

\bibitem[Wang et~al.(2005)Wang, Klijn, Zhang, Sieuwerts, Look, Yang, Talantov,
  Timmermans, {Meijer-van Gelder}, Yu, Jatkoe, Berns, Atkins, and
  Foekens]{Wang2005}
Y.~Wang, J.~G. Klijn, Y.~Zhang, A.~M. Sieuwerts, M.~P. Look, F.~Yang,
  D.~Talantov, M.~Timmermans, M.~E. {Meijer-van Gelder}, J.~Yu, T.~Jatkoe,
  E.~M. Berns, D.~Atkins, and J.~A. Foekens.
\newblock Gene-expression profiles to predict distant metastasis of
  lymph-node-negative primary breast cancer.
\newblock \emph{Lancet}, 365\penalty0 (9460):\penalty0 671--679, 2005.

\bibitem[Zou et~al.(2006)Zou, Hastie, and Tibshirani]{Zou04}
H.~Zou, T.~Hastie, and R.~Tibshirani.
\newblock Sparse principal component analysis.
\newblock \emph{Journal of Computational and Graphical Statistics}, 15\penalty0
  (2):\penalty0 265--286, 2006.

\end{thebibliography}

\end{document}